\newtheorem{thm}{Theorem}[section]
\newtheorem{cor}[thm]{Corollary}
\newtheorem{prop}[thm]{Proposition}
\newtheorem{lem}[thm]{Lemma}
\theoremstyle{definition}
\newtheorem{defn}[thm]{Definition}
\newtheorem{obs}[thm]{Observation}
\newtheorem{clm}[thm]{Claim}
\newtheorem{col}[thm]{Corollary}
\newtheorem{rem}[thm]{Remark}
\def\p{{\mathfrak p}}
\def\a{{\mathfrak a}}
\def\n{{\mathfrak n}}
\let\c@equation\c@thm
\numberwithin{equation}{section}
\title{Symbolic powers of  multipartite hypergraphs and Waldschmidt constant  }
\author[A. Alilooee]{Ali Alilooee}
\address{Bradley University, Department of Mathematics and Statistics,
Bradley Hall,
Peoria, IL, USA }
\email{a20480m2018@gmail.com}
\author[A. Banerjee]{Arindam Banerjee}
\address{Ramakrishna Mission Vivekenanda University, Belur, West Bengal, India}
\email{123.arindam@gmail.com}
\begin{document}

\maketitle

\begin{abstract}
In this paper, we find a criterion to answer when the second symbolic and ordinary powers
of the edge ideal of a 3-partite hypergraph are equal. Also, we give the formula to compute
the Waldschmidt constant of the path ideals of a cycle.    
\end{abstract}

\section{introduction}
\hspace{.1 in} In this paper, we study the symbolic power of some classes of squarefree monomial ideals. 
Let $I$ be an ideal in a Noetherian domain $R$, its $n^{\text{th}}$ symbolic power is defined as follow
$$I^{(n)}=\displaystyle\bigcap_{\p\in \text{Ass}_{R}(R/I)}(I^{n}R_{\p}\cap R).$$

Two of the most challenging questions about the symbolic powers are:
\begin{enumerate}
\item In general, the symbolic and ordinary powers are not equal. The question is for what classes of ideals, we have the equality of the symbolic and the ordinary powers. It is a too broad question. Another less overall problem here is for what powers $n$ we have 
$I^{n}=I^{(n)}$?
\item Another interesting questions is to find the generators of the symbolic powers of some classes of ideals. More precisely, if $I$ is a squarefree monomial ideal, we know there is a combinatorial presentation of the generators of $I$. The question is what the combinatorial character can describe the generators of $I^{(n)}$ for some $n$?
\end{enumerate}
In this paper, we address these questions for a particular class of cubic squarefree monomial ideal. More accurately, we consider the class of 3-uniform, 3-partite hypergraph, and we will give a combinatorial description of the generators of the second symbolic power of these hypergraphs.
We define the notion of bad hypergraphs and classify all 3-uniform 3-partite hypergraphs for which second symbolic power and second ordinary powers are the same:
 
\begin{thm}
Let $\mathcal{H}$ be a 3-uniform and 3-partite hypergraph and $I=I(\mathcal{H})$ be its edge ideal. Then $I^{(2)}=I^2$ if and only if one of the following holds
\begin{enumerate}
\item $\mathcal{H}$ has no bad subhypergraph of length 3;
\item If $\mathcal{H}$ has a bad subhypergraph of length 3 say $\mathcal{B}$ then there is a hyperedge $E$ of $\mathcal{B}$ such that $V(\mathcal{B})\backslash E$ is a hyperedge of $\mathcal{H}$
\end{enumerate}
\end{thm}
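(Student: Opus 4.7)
\medskip
\noindent\textbf{Plan of proof.} The strategy rests on the standard combinatorial description
\[
I^{(2)} \;=\; \bigcap_{C}\p_{C}^{\,2},
\]
where $C$ ranges over the minimal vertex covers of $\mathcal{H}$ and $\p_{C}$ is the prime generated by the variables of $C$. For a squarefree monomial $m$ this gives two criteria:
\emph{(a)} $m\in I^{(2)}$ iff $|\operatorname{supp}(m)\cap C|\geq 2$ for every minimal vertex cover $C$, equivalently iff for every $v\in\operatorname{supp}(m)$ the set $\operatorname{supp}(m)\setminus\{v\}$ still contains a hyperedge of $\mathcal{H}$; and
\emph{(b)} $m\in I^{2}$ iff $\operatorname{supp}(m)$ contains two disjoint hyperedges of $\mathcal{H}$.
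After the usual reduction of the problem to squarefree minimal generators of $I^{(2)}$, the theorem amounts to classifying the squarefree monomials that could possibly lie in $I^{(2)}\setminus I^{2}$.

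\medskip
\noindent\textbf{$(\Rightarrow)$.} Suppose $I^{(2)}=I^{2}$ and let $\mathcal{B}$ be a bad subhypergraph of length $3$, so that $|V(\mathcal{B})|=6$. The definition of ``bad'' (by analogy with the odd-cycle obstruction in the graph case) should ensure that removing any single vertex from $V(\mathcal{B})$ still leaves a hyperedge of $\mathcal{B}$, so criterion (a) yields $m_{\mathcal{B}}:=\prod_{x\in V(\mathcal{B})}x\in I^{(2)}=I^{2}$. Criterion (b) then partitions $V(\mathcal{B})$ into two disjoint hyperedges $E_{1},E_{2}$ of $\mathcal{H}$; since both lie inside $V(\mathcal{B})$, one of them (say $E_{1}$) must be an edge of $\mathcal{B}$, and the other is $V(\mathcal{B})\setminus E_{1}$, which is exactly condition~(2).

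\medskip
\noindent\textbf{$(\Leftarrow)$.} Assume (1) or (2), and for contradiction take $m\in I^{(2)}\setminus I^{2}$ squarefree with $\operatorname{supp}(m)$ minimal under inclusion. The 3-partite, 3-uniform structure severely restricts how many vertices of each part $V_i$ can appear in $\operatorname{supp}(m)$, and criterion (a) combined with the minimality of the support forces the subhypergraph induced on $\operatorname{supp}(m)$ to satisfy the property that every vertex-removal still leaves an edge behind. A finite case analysis should then yield either two disjoint hyperedges in $\operatorname{supp}(m)$ (contradicting (b)) or a bad subhypergraph $\mathcal{B}$ of length $3$ with $V(\mathcal{B})\subseteq\operatorname{supp}(m)$; in the latter case hypothesis~(2) provides an edge $E$ with $V(\mathcal{B})\setminus E$ also an edge of $\mathcal{H}$, and those two disjoint edges again contradict $m\notin I^{2}$. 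The main obstacle is this final classification step: one must argue that the minimal support of a monomial witnessing $I^{(2)}\not\subseteq I^{2}$ must carry precisely a bad configuration of length exactly $3$ and nothing more exotic. The 3-partite hypothesis is what makes the case analysis tractable, by bounding $|\operatorname{supp}(m)\cap V_{i}|$ for each part and limiting how the hyperedges inside $\operatorname{supp}(m)$ can interact.
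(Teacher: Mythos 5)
Your overall architecture matches the paper's: show that a witness monomial for $I^{(2)}\neq I^{2}$ must contain the vertex set of a bad subhypergraph of length $3$, then use hypothesis (2) to split that vertex set into two disjoint hyperedges and contradict $m\notin I^{2}$; conversely, feed $\prod_{x\in V(\mathcal{B})}x$ into $I^{(2)}=I^{2}$ to extract the complementary edge. Your forward direction is essentially the paper's argument (modulo the small point, present in the paper as well, that the two disjoint edges partitioning $V(\mathcal{B})$ are a priori only edges of $\mathcal{H}$, not of $\mathcal{B}$). The genuine gap is in the converse: the entire content of the theorem is the claim that a minimal element of $I^{(2)}\setminus I^{2}$ must carry a bad subhypergraph of length exactly $3$, and you leave this as ``a finite case analysis should then yield\dots,'' explicitly flagging it as ``the main obstacle.'' That step is precisely the paper's Observation 3.8, whose proof is the real work: starting from an arbitrary $X^{a}\in I^{(2)}\setminus I^{2}$, one repeatedly applies Sullivant's criterion (Lemma 2.6) to variables $z$ with $z^{2}\nmid X^{a}$ to produce three pairwise-intersecting hyperedges $e,e',e''$ dividing $X^{a}$, then reduces the number of common vertices and uses $3$-partiteness to rule out the degenerate intersection patterns, concluding that $\langle e,e',e''\rangle$ (or a suitable replacement) is a bad subhypergraph of length $3$. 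Without carrying out this classification, the ``if'' direction is not proved.

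A secondary issue: your ``usual reduction to squarefree minimal generators of $I^{(2)}$'' is not justified. Minimal generators of $I^{(2)}$ for a squarefree monomial ideal need not be squarefree, and it is not automatic that a witness in $I^{(2)}\setminus I^{2}$ can be replaced by one with squarefree support (note that the support of an element of $I^{(2)}$ need not lie in $I^{(2)}$). The paper sidesteps this entirely by running Sullivant's criterion on an arbitrary monomial witness and always dividing by a variable whose square does not divide it. If you want to keep your vertex-cover criterion (a), you must either prove the squarefree reduction or restate (a) for non-reduced monomials.
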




 The ``ideal containment problem'',  given a nontrivial homogeneous ideal I of a polynomial ring  over a field, the problem is to determine all positive integer pairs $(m, r)$ such that $I^{(m)} \subseteq I^r$.  This
problem was motivated by the fundamental results of Ein-Lazarsfeld-Smith and Hochster-Huneke showing that containment
holds whenever $m \geq r(n-1)$. In order to capture more precise information about these containments, Bocci and Harbourne introduced the resurgence of $I$, denoted $\rho(I)$ and defined as $\rho(I) = \text{sup}\{\frac{m}{r}| I
^{(m)} \subsetneq I^r\}$.
In general, computing $\rho(I)$ is quite difficult. There has been an ongoing
research programme to bound $\rho(I)$ in terms of other invariants of $I$ that may be easier
to compute. One of the important constant for an ideal is the {\bf Waldschmidt constant}. Let $I$ be a nonzero homogenous ideal of the graded ring $R$, we let $\alpha(I)=\min\{d:I_{d}\neq 0\}$. The  Waldschmidt constant of $I$ is defined as follow
$$\widehat{\alpha}(I)=\displaystyle\lim_{n\rightarrow\infty}\frac{\alpha(I^{(n)})}{n}.$$
Waldschmidt in \cite{Waldschmidt1977} showed that this limit exists for ideals of finite point sets
in the context of complex analysis. He was interested in finding the minimum degree of a hypersurface that passes through the set of points with multiplicities. In other words, he was studying $\alpha(I^{(n)})$ when $I$ defined a set of points.

We also study this in this paper, and our main result regarding this is as follows:

\begin{prop}
Let $\mathcal{H}$ be a simple $r$-uniform and $r$-partite hypergraph and $I=I(\mathcal{H})$ be its edge ideal. Then the Waldschmidt constant of $I$ (i.e. $\widehat{\alpha}(I)$) is $r$.
\end{prop}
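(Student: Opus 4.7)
My plan is to establish matching inequalities $\widehat{\alpha}(I) \le r$ and $\widehat{\alpha}(I) \ge r$. Throughout, let $V = V_1 \sqcup \cdots \sqcup V_r$ denote the $r$-partition of the vertex set of $\mathcal{H}$; since $\mathcal{H}$ is $r$-uniform and $r$-partite, each edge meets each part $V_i$ in exactly one vertex.

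The upper bound $\widehat{\alpha}(I) \le r$ is the easy half. Because $\mathcal{H}$ is $r$-uniform, every minimal generator of $I$ has degree $r$, so $\alpha(I^n) = nr$ for all $n$. The tautological inclusion $I^n \subseteq I^{(n)}$ yields $\alpha(I^{(n)}) \le nr$, and dividing by $n$ and letting $n \to \infty$ gives $\widehat{\alpha}(I) \le r$.

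For the lower bound I will use the standard identification of symbolic powers for squarefree monomial ideals, namely $I^{(n)} = \bigcap_S P_S^n$, where $P_S = (x_v : v \in S)$ and $S$ ranges over the minimal vertex covers of $\mathcal{H}$. Any vertex cover contains a minimal one, so in fact $I^{(n)} \subseteq P_S^n$ for \emph{every} vertex cover $S$. The decisive observation is that each part $V_i$ is itself a vertex cover, because every edge meets it. Hence for any monomial $x^{\mathbf{a}} \in I^{(n)}$, the membership $x^{\mathbf{a}} \in P_{V_i}^n$ translates into $\sum_{v \in V_i} a_v \ge n$ for each $i$. Summing over $i$ and using that the $V_i$ partition $V$,
$$\deg(x^{\mathbf{a}}) \;=\; \sum_{v \in V} a_v \;=\; \sum_{i=1}^{r} \sum_{v \in V_i} a_v \;\ge\; rn.$$
Since $I^{(n)}$ is a monomial ideal, $\alpha(I^{(n)})$ is attained by a monomial, so $\alpha(I^{(n)}) \ge rn$ and $\widehat{\alpha}(I) \ge r$.

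The argument is essentially an LP feasibility computation made concrete by the partition structure, and I anticipate no serious obstacle. The only point that merits care is confirming that each $V_i$ really is a vertex cover, which is automatic from the combination of $r$-partiteness and $r$-uniformity; the disjointness of the $V_i$ then does all the work in the summation step.
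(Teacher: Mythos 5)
Your proposal is correct and follows essentially the same route as the paper: both halves match, with the lower bound obtained by noting that each part $V_i$ of the $r$-partition is a vertex cover, so $I^{(n)}\subseteq P_{V_i}^{n}$, and then using the disjointness of the parts to force $\deg \ge rn$ (the paper phrases this via coprime divisors $\mathfrak{n}_i$ rather than summing exponents, but it is the same argument). No further comment is needed.
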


We organize the paper as follows.  In section (2) we review some necessary preliminaries. Then in section (3), we will consider the $r$-partite hypergraph and we show the Waldschmidt constant of their edge ideals is $r$. In section (4) we shall study cubic path ideals.

\section{Preliminaries} 

\begin{defn}
Let $X=\{x_1,x_2,\dots,x_n\}$ be a finite set. A {\bf simple hypergraph} on $X$ is a family $\mathcal{H}=(E_1,\dots,E_q)$ of subsets of $X$ such that 
\begin{enumerate}
\item All $E_i$s are nonempty;\\
\item $\displaystyle\cup_{i=1}^{q} E_i=X$;\\
\item None of the $E_i$s is contained within another.

\end{enumerate}
In this paper all hypergraphs are simple. The elements of $X$ are called {\bf vertices} and the sets $E_1,E_2,\dots,E_q$ are called the {\bf hyperedges} of $\mathcal{H}$. 
We sometimes denote the hypergraph $\mathcal{H}$ over the vertex set $X$ and with the edge set $\mathcal{E}=\{E_1,E_2,\dots,E_q\}$ with the pair $\mathcal{H}=(X,\mathcal{E})$.   
\end{defn}
\begin{defn}
A hypergraph $\mathcal{H}$ is called {\bf $r$-uniform} if for each $E\in \mathcal{H}$ we have $|E|=r$. It is obvious that a 2-uniform hypergraph is a graph.  
\end{defn}
\begin{defn}
Let $\mathcal{H}=(E_1,E_2,\dots,E_q)$ be a simple $r$-uniform hypergraph over the vertex set $X$ and $k$ be a field. We define the {\bf edge ideal} of $\mathcal{H}$ as a squarefree
monomial  ideal $I(\mathcal{H})$ in the polynomial ring $R=k[X]$ defined as
$$I(\mathcal{H})=(x_{i_1}\dots x_{i_r}:\{i_1\dots i_r\}\in \mathcal{H}).$$
\end{defn}

We need the following result in Chapter 4. We do not give the exact definition of the fractional coloring for more details for the concept of the fractional coloring see \cite{Ullman2011}. 
\begin{defn}[Transitivity]\label{transitivity}
A hypergraph is called {\bf vertex-transitive} if for each two vertices $x,y$ there exists an automorphism  
$\pi$ on the hypergraph such that $\pi(x)=y$. 
\end{defn}
\begin{prop}[(\cite{Ullman2011}, Proposition 3.1.1]Let $\mathcal{H}$ be a vertex-transitive hypergraph and $\chi^{*}(\mathcal{H})$ denotes the fractional chromatic number of $\mathcal{H}$,  then we have  

$$ \chi^{*}(\mathcal{H})= \frac{|V(\mathcal{H})|}{\a(\mathcal{H})}.$$
where $|V(\mathcal{H})|$ is the cardinality of the vertex set $\mathcal{H}$ and $\a(\mathcal{H})$ is the maximum cardinality of an independent set (A set $W$ is called independent if there is no hyperedge $E$ such that $E\subset W$).
\end{prop}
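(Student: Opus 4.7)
The plan is to realize $\chi^{*}(\mathcal{H})$ through its standard linear-programming definition and then exploit the automorphism group to build an explicit optimal fractional coloring matching the trivial combinatorial lower bound. Recall that $\chi^{*}(\mathcal{H})$ is the infimum of $\sum_{S} w_{S}$ taken over all nonnegative weight assignments on the independent sets $S$ of $\mathcal{H}$ satisfying $\sum_{S \ni v} w_{S} \geq 1$ for every $v \in V(\mathcal{H})$. Since $V(\mathcal{H})$ is finite, the LP has finitely many variables and the infimum is attained.

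First I would establish the lower bound $\chi^{*}(\mathcal{H}) \geq |V(\mathcal{H})|/\a(\mathcal{H})$, which requires no transitivity hypothesis at all. Summing the covering inequality over all $v \in V(\mathcal{H})$ and swapping the order of summation gives $\sum_{S} |S|\, w_{S} \geq |V(\mathcal{H})|$; since $|S| \leq \a(\mathcal{H})$ for every independent set $S$, this forces $\sum_{S} w_{S} \geq |V(\mathcal{H})|/\a(\mathcal{H})$.

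For the reverse inequality I would use vertex-transitivity to produce a concrete feasible solution of the right size. Fix a maximum independent set $S_{0}$ and let $\mathcal{O} = \{\pi(S_{0}) : \pi \in \text{Aut}(\mathcal{H})\}$ be its orbit under the automorphism group; because automorphisms send hyperedges to hyperedges, each $\pi(S_{0})$ is again an independent set of size $\a(\mathcal{H})$. The central step is to show that every vertex lies in the same number $k$ of sets from $\mathcal{O}$: given any two vertices $u, v$ and an automorphism $\sigma$ with $\sigma(u) = v$, the map $S \mapsto \sigma(S)$ is a bijection $\{S \in \mathcal{O} : u \in S\} \to \{S \in \mathcal{O} : v \in S\}$. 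A double count of the pairs $(v, S)$ with $v \in S \in \mathcal{O}$ then gives $k\,|V(\mathcal{H})| = |\mathcal{O}|\,\a(\mathcal{H})$, so assigning weight $1/k$ to each set in $\mathcal{O}$ and zero elsewhere yields a feasible fractional coloring of total weight $|\mathcal{O}|/k = |V(\mathcal{H})|/\a(\mathcal{H})$.

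The main obstacle is precisely the vertex-regularity claim $k = |\mathcal{O}|\,\a(\mathcal{H})/|V(\mathcal{H})|$, where transitivity enters in an essential way; the rest of the argument is LP manipulation and a double count. Once the orbit-averaging bijection above is verified the two matching inequalities combine to give the stated equality.
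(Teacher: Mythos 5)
The paper does not prove this proposition; it is quoted verbatim from the cited reference (Scheinerman--Ullman, Proposition 3.1.1), so there is no in-paper argument to compare against. Your proof is correct and is essentially the standard one from that reference: the counting/LP argument $\sum_S |S| w_S \ge |V(\mathcal{H})|$ gives the lower bound $|V(\mathcal{H})|/\a(\mathcal{H})$ without any transitivity hypothesis, and the orbit of a maximum independent set under $\mathrm{Aut}(\mathcal{H})$, weighted uniformly by $1/k$ where the regularity $k\,|V(\mathcal{H})| = |\mathcal{O}|\,\a(\mathcal{H})$ is forced by the vertex-transitivity bijection $S \mapsto \sigma(S)$, supplies a feasible fractional coloring attaining that bound. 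The only implicit hypothesis worth flagging is $\a(\mathcal{H}) \ge 1$ (so that $k>0$), which holds whenever the statement is meaningful.
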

In order to prove the one of our observations in Chapter 3 we need to recall the following lemma shown by Sullivant in \cite{SULLIVANT2006}.

\begin{lem}[\cite{SULLIVANT2006}, Lemma 3.6]\label{sulivan}
Let $\mathcal{H}$ be a hypergraph and $I=I(\mathcal{H})$ be its edge ideal over the polynomial ring $R=k[x_1,\dots,x_n]$. Then 
\begin{itemize}
\item A monomial $X^{a}=x_1^{a_1}\dots x_n^{a_n}\in I^{(m)}$ for some $m\geq 1$ and the integer vector $a=(a_1,\dots,a_n)$, if and only if  for every monomial $X^{b}$ (where $b$ is an integer vector) with
$\text{deg}(X^b)\leq m-1$ dividing $X^{a}$, $\frac{X^{a}}{X^b}\in I$. 
\end{itemize}

\end{lem}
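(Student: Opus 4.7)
The plan is to identify the symbolic power via primary decomposition and translate the divisibility condition into a combinatorial statement about vertex covers of $\mathcal{H}$. Since $I=I(\mathcal{H})$ is a squarefree monomial ideal, $R/I$ is reduced and $\mathrm{Ass}(R/I)=\mathrm{Min}(I)$. The minimal primes are the monomial primes $P_C=(x_j : j\in C)$ indexed by the minimal vertex covers $C$ of $\mathcal{H}$ (subsets of $V(\mathcal{H})$ meeting every hyperedge, minimal under inclusion). Localization sends every edge generator to a unit multiple of some $x_j\in P_C$, so $IR_{P_C}=P_CR_{P_C}$, and because $P_C$ is generated by a regular sequence of variables we have $I^mR_{P_C}\cap R=P_C^m$. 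Consequently
\[
I^{(m)} \;=\; \bigcap_{C}P_C^m,
\]
where $C$ ranges over minimal vertex covers of $\mathcal{H}$. The key dictionary is: $X^a\in P_C^m$ iff $\sum_{j\in C}a_j\ge m$, and $X^c\in I$ iff $\mathrm{supp}(X^c)$ contains some hyperedge of $\mathcal{H}$, equivalently iff $V(\mathcal{H})\setminus\mathrm{supp}(X^c)$ is \emph{not} a vertex cover.

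For the forward direction, suppose $X^a\in I^{(m)}$ and let $X^b$ be a monomial divisor of $X^a$ with $\deg(X^b)\le m-1$. If $X^a/X^b$ were not in $I$, then $V(\mathcal{H})\setminus\mathrm{supp}(X^a/X^b)$ would be a vertex cover and would contain some minimal cover $C$. For such $C$ the exponent vector of $X^a/X^b$ vanishes on $C$, i.e.\ $a_j=b_j$ for every $j\in C$. Summing over $C$ would give $\sum_{j\in C}a_j=\sum_{j\in C}b_j\le\deg(X^b)\le m-1$, contradicting $X^a\in P_C^m$ obtained from the first paragraph.

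For the reverse direction, I would argue contrapositively. If $X^a\notin I^{(m)}$, pick a minimal cover $C$ with $\sum_{j\in C}a_j\le m-1$ and set $X^b:=\prod_{j\in C}x_j^{a_j}$. Then $X^b\mid X^a$ and $\deg(X^b)\le m-1$, yet $X^a/X^b$ has support inside the independent set $V(\mathcal{H})\setminus C$, so it is divisible by no hyperedge generator and thus lies outside $I$. This exhibits a divisor violating the hypothesis and closes the equivalence.

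The only non-formal ingredient is the identification $I^{(m)}=\bigcap_C P_C^m$, which is a well-known consequence of the primary decomposition of squarefree monomial ideals; beyond this I anticipate no substantive obstacle, since the remainder of the argument is a direct translation between supports of monomials and complements of minimal vertex covers.
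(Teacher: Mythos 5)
This lemma is quoted from Sullivant (\cite{SULLIVANT2006}, Lemma 3.6) and the paper supplies no proof of its own, so there is nothing internal to compare against; judged on its own terms, your argument is correct and complete. It rests on the standard identification $I^{(m)}=\bigcap_C P_C^m$ over minimal vertex covers $C$, together with the two dictionaries $X^a\in P_C^m\Leftrightarrow\sum_{j\in C}a_j\ge m$ and $X^c\in I\Leftrightarrow V(\mathcal{H})\setminus\text{supp}(X^c)$ is not a vertex cover; both directions then go through exactly as you write them, and this is essentially the same route taken in the literature. One small imprecision: localization does not send an edge generator to a unit multiple of a \emph{single} variable of $P_C$ (an edge may meet $C$ in several vertices); that observation only yields $IR_{P_C}\subseteq P_CR_{P_C}$, and for the reverse inclusion you need the minimality of $C$ (for each $j\in C$ there is an edge $E$ with $E\cap C=\{j\}$), or alternatively the fact that $P_{C'}R_{P_C}=R_{P_C}$ for every other minimal cover $C'$ applied to $I=\bigcap_{C'}P_{C'}$. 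This does not affect the validity of the proof, but the sentence as stated should be repaired.
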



We recall the definition of Mengerian hypergraphs here. 
\begin{defn}\label{min-max}
Let $A$ be the edge-vertex incidence matrix of the hypergraph $\mathcal{H}$. Then $\mathcal{H}$ is called a {\bf Mengerian hypergraph} if for all $c\in \mathbb{N}^{n}$ we have   
 
$$\min\{c\cdot x|Ax\geq \mathbbm{1},x\in\mathbb{N}^{n}\}=\max\{y\cdot\mathbbm{1}|yA\leq c; y\in \mathbb{N}^{m}\}$$

\end{defn}
\begin{thm}[\cite{Villarreal2007}, \cite{Herzog2008}, \cite{Tai2019}, \cite{Trung2006}]\label{Tai}
Let $I$ be the edge ideal of a hypergraph $\mathcal{H}$. Then $I^{(n)}=I^{n}$ for all $n\geq 1$ if and only if $\mathcal{H}$ is Mengerian.
\end{thm}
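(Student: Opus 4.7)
The plan is to translate each side of the claimed equivalence into an integer-programming statement and then invoke LP duality. The bridge is a combinatorial description of membership in $I^{(k)}$ and $I^{k}$ in terms of the edge--vertex incidence matrix $A\in\{0,1\}^{m\times n}$ of $\mathcal{H}$, whose rows are indexed by the $m$ hyperedges and columns by the $n$ vertices.

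First I would establish two dictionaries: for every $a\in\mathbb{N}^n$ and every $k\geq 1$,
\[ (\mathrm{i})\quad x^{a}\in I^{(k)} \iff \min\{a\cdot z : Az\geq\mathbbm{1},\ z\in\mathbb{N}^n\}\geq k, \]
\[ (\mathrm{ii})\quad x^{a}\in I^{k} \iff \max\{\mathbbm{1}\cdot y : yA\leq a,\ y\in\mathbb{N}^m\}\geq k. \]
For (i), since $I$ is a squarefree monomial ideal, the associated primes of $R/I$ are exactly the $P_{C}=(x_i:i\in C)$ for $C$ a minimal vertex cover of $\mathcal{H}$; hence $I^{(k)}=\bigcap_{C}P_{C}^{k}$, so $x^a\in I^{(k)}$ is equivalent to $\sum_{i\in C}a_i\geq k$ for every minimal vertex cover $C$. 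Because $a\geq 0$ and the entries of $A$ lie in $\{0,1\}$, truncating any integer cover $z$ to $\min(z,\mathbbm{1})$ still gives a cover and only decreases the objective, so the $\mathbb{N}^n$-minimum agrees with the minimum over (minimal) $0$--$1$ covers; alternatively one may appeal directly to Lemma \ref{sulivan}. For (ii), $x^a\in I^k$ means by definition that a product of $k$ edge monomials (with repetition) divides $x^a$, and such a product is recorded by a vector $y\in\mathbb{N}^m$ with $\mathbbm{1}\cdot y=k$ and $yA\leq a$; discarding surplus factors shows one may replace $=k$ by $\geq k$.

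With these two characterizations in place, both directions of the theorem follow by direct substitution. For $(\Leftarrow)$, assume $\mathcal{H}$ is Mengerian; the inclusion $I^k\subseteq I^{(k)}$ is automatic for every $k$, so fix $x^a\in I^{(k)}$. By (i) the IP minimum is $\geq k$; Definition \ref{min-max} applied with $c=a$ converts this into $\max\geq k$; and then (ii) yields $x^a\in I^k$, whence $I^{(k)}=I^k$. For $(\Rightarrow)$, fix $c\in\mathbb{N}^n$, let $k$ be the minimum value of the cover IP with objective $c$, and apply (i) to get $x^c\in I^{(k)}$; by hypothesis $x^c\in I^k$, and (ii) then yields $\max\geq k$. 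The matching upper bound is weak duality,
\[ \mathbbm{1}\cdot y \;\leq\; (Az)\cdot y \;=\; (yA)\cdot z \;\leq\; c\cdot z, \]
valid for any feasible pair, which gives $\max\leq\min=k$; hence $\max=\min$ and $\mathcal{H}$ is Mengerian.

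The principal technical point is the careful proof of the combinatorial description (i): one must pass from the finite list of minimal vertex covers supplied by the primary decomposition to arbitrary nonnegative integer covers, verifying that this relaxation does not lower the objective. Once that translation and its edge-counting counterpart (ii) are in hand, the equivalence with the Mengerian condition is a formal manipulation of the definitions together with weak LP duality.
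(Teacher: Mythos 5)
The paper states Theorem~\ref{Tai} as a quoted result from the cited literature and supplies no proof of its own, so there is no internal argument to compare against; judged on its own, your proof is correct and is essentially the standard one from those references. Your two dictionaries are sound --- (i) follows from the primary decomposition $I^{(k)}=\bigcap_C P_C^k$ over minimal vertex covers (or from Lemma~\ref{sulivan}), together with the observation that the support of any feasible $z\in\mathbb{N}^n$ is a vertex cover so the integer-program minimum equals the minimum $a$-weight of a minimal cover, and (ii) is the direct encoding of a length-$k$ product of edge monomials as $y\in\mathbb{N}^m$ with $\mathbbm{1}\cdot y=k$ and $yA\leq a$ --- and with these in hand both implications reduce, exactly as you say, to Definition~\ref{min-max} in one direction and to weak LP duality $\mathbbm{1}\cdot y\leq (yA)\cdot z\leq c\cdot z$ in the other.
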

\begin{prop}[\cite{Berge1989}, page 199, Proposition 1 and 2]
Let $I$ be a squarefree monomial ideal and $x$ be a variable and assume $I^{(n)}=I^{n}$ for all $n\geq 1$, then $(I:x)^{(n)}=(I:x)^{n}$.
 \end{prop}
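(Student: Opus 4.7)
The plan is to use Sullivant's criterion (Lemma \ref{sulivan}) together with the hypothesis $I^{(n)}=I^{n}$, shuttling between $I$ and $J:=(I:x)$ by multiplying or dividing by the variable $x$. The containment $J^{n}\subseteq J^{(n)}$ is automatic, so the only content is to show $J^{(n)}\subseteq J^{n}$. Given a monomial $X^{a}\in J^{(n)}$, the strategy is: first lift it to $x^{n}X^{a}\in I^{(n)}=I^{n}$; then factor this element of $I^{n}$ into squarefree generators and reorganise so that $X^{a}$ becomes visibly an element of $J^{n}$.

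For the lift, take any monomial $X^{c}$ with $\deg X^{c}\leq n-1$ and $X^{c}\mid x^{n}X^{a}$, and write $X^{c}=x^{k}X^{b}$ with $X^{b}$ free of $x$. Then $k\leq n-1$ and $X^{b}\mid X^{a}$ with $\deg X^{b}\leq n-1$, so Sullivant's criterion applied to $X^{a}\in J^{(n)}$ gives $X^{a}/X^{b}\in J$, i.e.\ $x\cdot X^{a}/X^{b}\in I$. Multiplying by the non-negative power $x^{n-k-1}$ shows $(x^{n}X^{a})/X^{c}\in I$, so Sullivant's criterion in the reverse direction places $x^{n}X^{a}$ in $I^{(n)}$, which by hypothesis equals $I^{n}$.

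For the descent, since $x^{n}X^{a}$ is a monomial lying in the monomial ideal $I^{n}$, one can write $x^{n}X^{a}=N\cdot g_{1}\cdots g_{n}$ with $N$ a monomial and each $g_{i}$ a minimal (squarefree) generator of $I$. Let $s$ be the number of $g_{i}$'s that contain $x$; for these, set $h_{i}:=g_{i}/x\in J$, and for the others note $g_{i}\in I\subseteq J$. Comparing the exponent of $x$ on both sides of the factorisation shows $v_{x}(N)\geq n-s$, so $N/x^{n-s}$ is a genuine monomial; rewriting gives $X^{a}=(N/x^{n-s})\cdot\prod_{i:\,x\mid g_{i}}h_{i}\cdot\prod_{i:\,x\nmid g_{i}}g_{i}$, which exhibits $X^{a}$ as a monomial multiple of a product of $n$ elements of $J$, hence in $J^{n}$.

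The main technical obstacle is the $x$-exponent bookkeeping in the descent step: one must check that after cancelling $x^{n}$ in the denominator and trading each $g_{i}$ containing $x$ for $h_{i}=g_{i}/x$, enough powers of $x$ remain in $N$ to avoid fractional monomials. A conceptually cleaner alternative would be to invoke Theorem \ref{Tai} to rephrase the hypothesis as $\mathcal{H}(I)$ being Mengerian, identify $\mathcal{H}(J)$ with the vertex-contraction $\mathcal{H}/x$, and appeal to the classical closure of the Mengerian property under contraction; but that route pulls in polyhedral input outside the current scope, whereas the approach above stays within the framework already set up in Section~2.
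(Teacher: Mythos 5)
Your argument is correct. Both halves check out: the lift works because for any $X^{c}=x^{k}X^{b}$ of degree at most $n-1$ dividing $x^{n}X^{a}$ one has $k\leq n-1$ and $X^{b}\mid X^{a}$, so Sullivant's criterion for $J=(I:x)$ gives $x\cdot X^{a}/X^{b}\in I$ and hence $(x^{n}X^{a})/X^{c}=x^{n-k-1}\cdot\bigl(xX^{a}/X^{b}\bigr)\in I$; and in the descent the squarefreeness of the $g_{i}$ forces $v_{x}(g_{1}\cdots g_{n})=s$, so $v_{x}(N)=n+v_{x}(X^{a})-s\geq n-s$ and the cancellation of $x^{n}$ never produces a fractional monomial. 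Note, however, that the paper itself offers no proof of this proposition: it is stated as a citation to Berge, where the underlying content is exactly the ``cleaner alternative'' you mention, namely that the Mengerian (max-flow min-cut) property of a hypergraph is preserved under the contraction corresponding to $(I:x)$, which combined with Theorem~\ref{Tai} gives the statement. Your route is therefore genuinely different from the one the paper leans on: it trades the polyhedral/combinatorial closure property for a direct monomial computation entirely within the Sullivant framework already set up in Section~2, at the cost of a little $x$-exponent bookkeeping but with the benefit of being self-contained and not requiring the reader to consult Berge. The only cosmetic caveat is the degenerate case $x\in I$, where $J=(1)$ and the claim is trivial but Sullivant's lemma (stated for edge ideals of simple hypergraphs) does not literally apply; it is worth disposing of that case separately in one line.
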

We also recall the following remark from \cite{Dao2018}. We will use this remark to prove one of our main result in Chapter 3. 
\begin{rem}[\cite{Dao2018}, Remark 4.12]\label{Craigmethod}
Let $J$ be a squarefree monomial ideal and let $x$ be a variable. We let $I_{x}=(J:x)$ and $I$ be an ideal generated by all monomials in $J$ which do not involve $x$. Put $I_x=I+L$ where $L$ is an ideal generated by all monomials in $I_x$ which are not in  $I$. Then if we assume that $I^{(n)}=I^{n}$ and 
$I_{x}^{(n)}=I_{x}^{n}$ for all $n\geq 1$, then we have $J^{(n)}=J^n$ for all $n\geq 1$ if and only if 
$$\displaystyle I^k\cap I^{i}L^{n-i}\subset \sum_{j=k}^{n} I^{j}L^{n-j}=I^{k}I_{x}^{n-k}$$
for all $k$ and $i$ in which we have $0\leq i<k\leq n$.  
\end{rem}

\section{$r$-Partite Hypergraphs}

\begin{defn}
Let $\mathcal{H}$ be a uniform hypergraph. The hypergraph $\mathcal{H}$ is called {\bf $r$-partite} if $V(\mathcal{H})=\displaystyle\bigcup_{k=1}^{r}X_i$ such that
\begin{enumerate}
\item $(X_1,\dots,X_r)$ are pairwise disjoint.
\item For each $i$, $X_i$ is a vertex cover of $\mathcal{H}$. (A set of vertices is called a {\bf vertex cover} of $\mathcal{H}$ if it meets all the edges.) 
\item For each hyperedge $E$ of $\mathcal{H}$ and for each $i$ we have $|X_i\cap E|=1$.
\end{enumerate}
Moreover a $r$-partite hypergraph $\mathcal{H}$ with the $r$-partition $(X_1,X_2,\dots,X_{r})$ is called a {\bf complete $r$-partite hypergraph} if for each $x_i\in X_i$, for $i=1,2,\dots,r$, we have $\{x_1,x_2,\dots,x_r\}$ is a hyperedge of $\mathcal{H}$. 
\end{defn}
Beckenbach and Scheidweiler in \cite{Isabel2017} showed that every Mengerian $r$-uniform hypergraph is $r$-partite. Then we can have the following theorem.      
\begin{thm}[\cite{Isabel2017},Theorem 2.1]\label{Isabel}
Let $\mathcal{H}$ be a $r$-uniform hypergraph and $I=I(\mathcal{H})$ be its edge ideal. If $I^{(n)}=I^n$ for all $n\geq 1$, then $\mathcal{H}$ is $r$-partite.  

\end{thm}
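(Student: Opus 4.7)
The plan is to invoke Theorem~\ref{Tai} to convert the hypothesis $I^{(n)}=I^n$ for all $n\geq 1$ into the combinatorial condition that $\mathcal{H}$ is Mengerian, and then to prove the purely combinatorial claim that any Mengerian $r$-uniform hypergraph is $r$-partite. The content of the theorem lies entirely in this second step; the first is a direct citation.

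For the combinatorial step I would proceed by contrapositive: assume $\mathcal{H}$ is $r$-uniform but not $r$-partite, and construct a cost vector $c\in\mathbb{N}^n$ witnessing a failure of the min--max equality in Definition~\ref{min-max}. The canonical choice is $c=\mathbbm{1}_{V(\mathcal{H}')}$ for a carefully chosen induced subhypergraph $\mathcal{H}'\subseteq\mathcal{H}$. With this choice the left-hand side of the Mengerian equation specializes (since vertices outside $V(\mathcal{H}')$ are free and thus cover all edges meeting them at no cost) to the integer vertex cover number $\tau(\mathcal{H}')$, while the right-hand side specializes (since the constraint $yA\leq c$ forces $y_E=0$ whenever $E\not\subseteq V(\mathcal{H}')$) to the integer matching number $\nu(\mathcal{H}')$. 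Thus Mengerian-ness of $\mathcal{H}$ forces $\tau(\mathcal{H}')=\nu(\mathcal{H}')$ on every induced subhypergraph.

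The heart of the argument is then the structural assertion: every non-$r$-partite $r$-uniform hypergraph contains an induced subhypergraph with $\tau>\nu$. The guiding case is $r=2$, where the obstruction is an odd cycle $C_{2k+1}$, which satisfies $\tau=k+1>k=\nu$. For $r\geq 3$ I would seek the analogous obstruction among the odd ``special'' cycles of Berge's theory of balanced hypergraphs. The two intermediate facts to invoke are that (i) a balanced $r$-uniform hypergraph is $r$-partite, and (ii) balancedness of the incidence matrix is equivalent to the max-flow/min-cut property holding for every nonnegative integer cost vector. Combined with the Mengerian hypothesis, these close the loop: non-$r$-partite forces a forbidden odd cycle, which in turn produces an induced $\mathcal{H}'$ where the LP integrality gap is strictly positive.

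The main obstacle is precisely this structural step: identifying the correct odd obstruction in an arbitrary non-$r$-partite $r$-uniform hypergraph and verifying the integrality gap $\tau(\mathcal{H}')>\nu(\mathcal{H}')$ on it. For $r=2$ the obstruction is transparent, but for higher $r$ it requires genuine input from the theory of balanced hypergraphs and is where I expect the bulk of the work. Once such an $\mathcal{H}'$ is exhibited, specializing the Mengerian property of $\mathcal{H}$ to $c=\mathbbm{1}_{V(\mathcal{H}')}$ immediately contradicts $\tau(\mathcal{H}')>\nu(\mathcal{H}')$, completing the contrapositive.
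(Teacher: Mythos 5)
The paper offers no proof of this statement at all: it is obtained by combining Theorem~\ref{Tai} (the hypothesis $I^{(n)}=I^{n}$ for all $n\geq 1$ is equivalent to $\mathcal{H}$ being Mengerian) with a direct citation of Beckenbach and Scheidweiler's result that every Mengerian $r$-uniform hypergraph is $r$-partite. Your first step therefore coincides exactly with what the paper does; the problem lies in your attempt to supply the combinatorial core that the paper outsources.

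The gap is in your intermediate fact (ii). Balancedness is \emph{not} equivalent to the max-flow/min-cut property holding for every nonnegative integer cost vector; it is strictly stronger. The Fulkerson--Hoffman--Oppenheim and Berge--Las Vergnas theorems give the implication balanced $\Rightarrow$ Mengerian, but the converse fails: for instance, the clutter of $s$--$t$ paths of a graph is Mengerian by Menger's theorem yet is in general not balanced. The correct characterization of balancedness in min-max terms requires the K\"onig property $\tau=\nu$ to hold for every \emph{partial} subhypergraph, i.e.\ after deleting edges as well as vertices, and edge deletion cannot be simulated by a choice of cost vector $c$. Concretely, your specialization $c=\mathbbm{1}_{V(\mathcal{H}')}$ only reaches \emph{induced} subhypergraphs, whereas the odd special cycle certifying non-balancedness in general lives only in a partial subhypergraph and need not force $\tau>\nu$ on any induced one. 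So the chain ``not $r$-partite $\Rightarrow$ not balanced $\Rightarrow$ some induced $\mathcal{H}'$ has $\tau(\mathcal{H}')>\nu(\mathcal{H}')$ $\Rightarrow$ not Mengerian'' breaks at the second arrow; for $r=2$ you are rescued by the accident that a shortest odd cycle is chordless, hence induced, but nothing analogous is available for $r\geq 3$ by these means. Closing the argument requires constructing a violating cost vector directly from the failure of $r$-partiteness, which is what the cited Beckenbach--Scheidweiler proof actually does and what your sketch does not supply.
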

Now we are ready to show complete $r$-uniform, $r$-partite hypergraphs are Mengerian. First we show the following lemma.
\begin{lem}\label{newLemma}
Let $\mathcal{H}$ be a $r$-uniform and complete $r$-partite hypergraph such that at least one of the partitions of $\mathcal{H}$ has exactly one element. If we let $J=I(\mathcal{H})$ be the edge ideal of $\mathcal{H}$, then $J^{(n)}=J^n$ for all $n\geq 1$. 

\end{lem}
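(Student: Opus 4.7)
The plan is to exploit the singleton partition to give $J$ a transparent primary decomposition, and then use the fact that intersections of monomial ideals in pairwise disjoint variables coincide with their products.

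Without loss of generality let $X_1=\{x\}$ be the singleton partition and set $\p_i=(X_i)$ for $i=2,\dots,r$. Since $\mathcal{H}$ is complete $r$-partite, its hyperedges are exactly the $r$-subsets $\{x,y_2,\dots,y_r\}$ with $y_i\in X_i$, so at the level of generators
$$J = x\cdot \p_2\p_3\cdots\p_r.$$
Next I would enumerate the minimal vertex covers of $\mathcal{H}$. A cover $C$ either contains $x$, in which case minimality forces $C=\{x\}$, or misses $x$; in the latter case the covering condition says no edge avoids $C$, which (since the hypergraph is complete) forces $X_i\subseteq C$ for some $i\geq 2$, and then minimality gives $C=X_i$. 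Thus the minimal vertex covers are precisely $\{x\},X_2,\dots,X_r$, yielding the irredundant primary decomposition
$$J = (x)\cap \p_2\cap\cdots\cap \p_r.$$

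To compute $J^{(n)}$, I would use that for a squarefree monomial ideal $J^{(n)}=\bigcap_{\q\in\mathrm{Ass}(R/J)}\q^{(n)}$. Each of $(x)$ and $\p_2,\dots,\p_r$ is generated by a subset of the variables, hence is a complete intersection prime whose ordinary powers are already primary, so $\p_i^{(n)}=\p_i^n$ and $(x)^{(n)}=(x^n)$. Therefore
$$J^{(n)} = (x^n)\cap \p_2^n\cap\cdots\cap \p_r^n.$$
Since the generators of $(x),\p_2,\dots,\p_r$ involve pairwise disjoint sets of variables, the standard fact that intersections of monomial ideals supported on pairwise disjoint variable sets equal their products gives
$$J^{(n)} = x^n\, \p_2^n\cdots \p_r^n = (x\, \p_2\cdots \p_r)^n = J^n,$$
as desired.

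The proof has no real obstacle; the entire argument rests on the singleton-partition hypothesis, which forces $J$ to factor as $x$ times a product of primes in pairwise disjoint variables and simultaneously rules out the ``mixed'' minimal vertex covers that would otherwise appear and break the clean primary decomposition. Once that structural observation is in hand, all remaining steps (regular-sequence primes have $\q^{(n)}=\q^n$, and disjoint-support monomial intersections equal products) are routine.
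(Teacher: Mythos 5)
Your proof is correct, and it takes a genuinely different route from the paper's. The paper argues by induction on $r$: the base case $r=2$ is the complete bipartite graph (quoting Villarreal), and for the inductive step it passes to $I_{x_1}=(J:x_1)$, observes that the ideal $I$ of generators not involving $x_1$ is zero because $X_1=\{x_1\}$ meets every edge, and then invokes the localization/colon criterion of Remark~\ref{Craigmethod} (whose containment hypothesis is vacuous when $I=0$) to conclude $J^{(n)}=J^n$. You instead compute everything explicitly: the singleton part forces $J=x\,\p_2\cdots\p_r$ with the $\p_i=(X_i)$ supported on pairwise disjoint variables, the minimal vertex covers are exactly $\{x\},X_2,\dots,X_r$, so $J=(x)\cap\p_2\cap\cdots\cap\p_r$, and then $J^{(n)}=(x^n)\cap\p_2^n\cap\cdots\cap\p_r^n=x^n\p_2^n\cdots\p_r^n=J^n$ by the standard facts that symbolic powers of a squarefree monomial ideal are intersections of powers of its minimal (variable-generated) primes and that intersections of monomial ideals on disjoint variable supports equal products. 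All four steps check out: completeness of $\mathcal{H}$ is exactly what rules out mixed minimal covers and what makes $J$ factor as stated. What your argument buys is a direct, induction-free proof with an explicit formula for $J^{(n)}$; what the paper's approach buys is uniformity with the proof of Theorem~\ref{complete}, where this lemma is used as the base case of the induction on $|X_1|$ and the same colon-ideal machinery carries the general (non-singleton) case, which your disjoint-support trick does not reach since the primes $(X_1),\dots,(X_r)$ no longer give a decomposition of $J$ once $|X_1|>1$.
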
 
\begin{proof}
Let $V(\mathcal{H})=\displaystyle\bigcup_{i=1}^{r} X_i$ be a $r$-partition and let $X_1=\{x_1\}$. We proceed the proof by using the induction on $r$. If $r=2$, then $\mathcal{H}$ is a complete bipartite graph and then from [\cite{Villareal2009}, Proposition 4.27] we have $J^{(n)}=J^{n}$ for all $n\geq 1$. So we assume $r>2$. Let $I_{x_1}=(J:x_1)$. Since $\mathcal{H}$ is a complete $r$-partite hypergraph we can say $I_{x_1}$ is the edge ideal of a complete $(r-1)$-partite hypergraph over $(X_2,\dots,X_{r})$ and therefore, by using the induction hypothesis we have 
\begin{align*}
I_{x}^{(n)}=I_{x}^{n}\hspace{0.2 in}\text{for all $n\geq 1$}.
\end{align*} 
We use the notations of Remark~\ref{Craigmethod}. Since $\mathcal{H}$ is complete $r$-partite hypergraph and $X_1=\{x_1\}$, if $I$ is an ideal generated by monomials in $J$ which not involving $x_1$  we have $I=0$. Then since we have $I_{x_1}^{(n)}=I_{x_1}^{n}$ 
and $I^{(n)}=I^{n}$ for all $n\geq 1$ if we let $L=(J:x_1)$, we have    
$$\displaystyle I^k\cap I^{i}L^{n-i}\subset \sum_{j=k}^{n} I^{j}L^{n-j}$$
for all $k$ and $i$ in which we have $0\leq i<k\leq n$.  
Then from Remark~\ref{Craigmethod} we have $J^{(n)}=J^{n}$ for all $n\geq 1$. 

\end{proof}

\begin{thm}\label{complete}
Let $\mathcal{H}$ be a $r$-uniform and complete $r$-partite hypergraph and $J=I(\mathcal{H})$ be its edge ideal. Then $J^{(n)}=J^n$ for all $n\geq 1$. 

\end{thm}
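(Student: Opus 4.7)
The plan is to sidestep Lemma~\ref{newLemma} and the Craig method of Remark~\ref{Craigmethod} entirely, by a direct computation with the primary decomposition of $J$. Since $J$ is a squarefree monomial ideal, $J=\bigcap_{C} P_C$, where $C$ ranges over the minimal vertex covers of $\mathcal{H}$ and $P_C=(x:x\in C)$. The first step is to identify these covers: for a complete $r$-uniform $r$-partite hypergraph the minimal vertex covers are exactly $X_1,X_2,\dots,X_r$. Each $X_i$ is clearly a cover, and conversely a set $C$ missing at least one vertex $x_i$ from each part fails to cover $\{x_1,\dots,x_r\}$, which is a genuine edge because $\mathcal{H}$ is complete. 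Hence
\[ J \;=\; P_1\cap P_2\cap\cdots\cap P_r, \qquad P_i=(x:x\in X_i). \]

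Each $P_i$ is a prime ideal generated by a subset of the variables, so $P_i^{(n)}=P_i^n$, and the standard description of symbolic powers of a squarefree monomial ideal as the intersection of the ordinary powers of its minimal primes yields $J^{(n)}=P_1^n\cap P_2^n\cap\cdots\cap P_r^n$. The inclusion $J^n\subseteq\bigcap_i P_i^n$ is immediate since every edge meets every part. The theorem therefore reduces to the reverse containment, which, since all ideals involved are monomial, reduces in turn to the following statement: every monomial $m=\prod_x x^{a_x}$ with $\deg_{X_i}(m)=\sum_{x\in X_i}a_x\geq n$ for each $i$ already lies in $J^n$.

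I would verify this last step by a direct matching. For each $i$, pick a degree-$n$ monomial $m_i=v_{i,1}v_{i,2}\cdots v_{i,n}$ dividing $\prod_{x\in X_i}x^{a_x}$; this is possible precisely because $\deg_{X_i}(m)\geq n$. Set $e_j:=v_{1,j}v_{2,j}\cdots v_{r,j}$ for $j=1,\dots,n$. Each $e_j$ uses exactly one vertex from each part and so is an edge of $\mathcal{H}$ by completeness, hence a generator of $J$. Then $e_1e_2\cdots e_n=\prod_i m_i$ divides $m$, placing $m$ in $J^n$. The only step where completeness is essential is in declaring the $e_j$ to be edges — exactly where a non-complete $r$-partite hypergraph would fail to be Mengerian; otherwise the argument is short, combinatorial, and requires no induction on $r$ or on $|V(\mathcal{H})|$.
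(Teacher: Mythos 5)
Your argument is correct, and it takes a genuinely different route from the paper's proof. The paper proceeds by a double induction (on $r$ and on $|X_1|$), using the colon-ideal decomposition $I_x=(J:x)=I+L$ and the containment criterion of Remark~\ref{Craigmethod}, with Lemma~\ref{newLemma} handling the base case $|X_1|=1$; verifying the required containment $I^k\cap I^iL^{n-i}\subseteq I^kL^{n-k}$ there involves a somewhat delicate bookkeeping of degrees in $X_1$. You instead compute everything in one pass from the primary decomposition: the key observations that the minimal vertex covers of a complete $r$-partite $r$-uniform hypergraph are precisely the parts $X_1,\dots,X_r$, and that a monomial lies in $P_i^n$ exactly when its total degree in the variables of $X_i$ is at least $n$, reduce the theorem to the transversal-matching step, which is where completeness enters. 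Each step checks out: the $X_i$ are indeed the minimal covers (any cover missing a vertex from every part misses a complete transversal, which is an edge), the identity $J^{(n)}=\bigcap_i P_i^n$ is the standard description of symbolic powers of squarefree monomial ideals, and the monomials $e_j=v_{1,j}\cdots v_{r,j}$ are squarefree transversals and hence generators of $J$. Your approach is shorter, avoids induction and the machinery of \cite{Dao2018} entirely, and yields as a byproduct the explicit description of $J^{(n)}=J^n$ as the monomials of degree at least $n$ in each part (in effect a direct verification that these hypergraphs are Mengerian); the paper's approach has the advantage of illustrating the colon-ideal technique that is reused in the same style elsewhere, but for this particular statement your argument is the more economical one.
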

\begin{proof}
Let $R=k[V(\mathcal{H})]$ be a polynomial ring over a field $k$ and 
$$V(\mathcal{H})=\displaystyle\bigcup_{m=1}^{r} X_m$$ 
be a $r$-partition. We proceed the proof by using the induction on $r$ and induction on $|X_1|$. If $r=2$, then $\mathcal{H}$ is a complete bipartite graph and then from [\cite{Villareal2009}, Proposition 4.27] we have $J^{(n)}=J^{n}$ for all $n\geq 1$. 
Form Lemma~\ref{newLemma}, if we assume $|X_1|=1$, then we have $J^{(n)}=J^{n}$ for all $n\geq 1$.

So we assume $r>2$ and $|X_m|>1$ for all $m$. We pick a variable $x$ in $V(\mathcal{H})$ and we consider $I_{x}=(J:x)$. Without loss of generality we can assume $x\in X_1$. 
Since $\mathcal{H}$ is a complete $r$-partite hypergraph we have $I_{x}$ is generated by all monomials of the form $m/x$ where $m$ is a generator of $J$ contains $x$, thus we can conclude that $I_x$ is the edge ideal of a complete $(r-1)$-partite hypergraph over $(X_2,\dots,X_{r})$ and therefore, by using the induction hypothesis on $r$ we have 
\begin{align*}
I_{x}^{(n)}=I_{x}^{n}\hspace{0.2 in}\text{for all $n\geq 1$}.
\end{align*} 
On the other hands, if $I$ is an ideal generated by monomials in $J$ which not involving $x$,  we have $I$ is the edge ideal of a hypergraph $\mathcal{H}'$ where $\mathcal{H}'$ is a $r$-complete $r$-partite hypergraph on the vertex set  $V(\mathcal{H})\backslash\{x\}$ with $r$-partition $\displaystyle (X_1\backslash\{x\},X_2,\dots, X_r)$. We write $I_x=I+L$ where $L$ is an ideal generated with monomials in $I_x$ which are not in $I$. From induction hypothesis on $|X_1|$  we have $I^{(n)}=I^{n}$ for all $n\geq 1$. Also note that since $\mathcal{H}$ is $r$-complete we have $I\subset L$ and then $L=(J:x)$. We use the notations of Remark 2.14. thus we should show    
$$\displaystyle I^k\cap I^{i}L^{n-i}\subset \sum_{j=k}^{n} I^{j}L^{n-j}=I^{k}I_{x}^{n-k}=I^{k}L^{n-k}$$
for all $k$ and $i$ in which we have $0\leq i<k\leq n$. We pick a monomial $f\in I^{k}\cap I^{i}L^{n-i}$ then we have the following expressions for $f$
\begin{equation}
f=\displaystyle\left(\prod_{1\leq j\leq k}m_{j}\right)\prod_{y\in D}y
\end{equation}

\begin{equation}
f=\displaystyle\left(\prod_{1\leq j\leq i}m'_{j}\right)\left(\prod_{1\leq j\leq a}g_{j}\right)\prod_{\omega\in D'}\omega
\end{equation}
where $D,D'$ are sets of vertices, $m_j,m'_{j}\in I$, $g_i\in L$, and $a\geq n-i$.

Note that since $m_j,m'_j\in I$ and $\mathcal{H}$ is a $r$-partite hypergraph then we can conclude that the degree of the variables which are in $X_1$ in the first expression is at least $k$. Since there is no $x\in X_1$ such that $x$ divides $\prod_{1\leq j\leq a}g_j$ then we can conclude that the degree of the variables in $X_1$ in the monomial $\prod_{\omega\in D'}\omega$ is $k-i$. Then $f$ can be written as follow
$$f=\displaystyle\left(\prod_{1\leq j\leq i}m'_{j}\right)\left(\prod_{1\leq j\leq a}g_{j}\right)\left(\prod_{x\in X_1}x\right)\prod_{\omega\in D''}\omega$$
where $\text{deg}(\prod_{x\in X_1}x)=k-i$, then since $\mathcal{H}$ is a $r$-partite complete hypergraph then we have  
$$f=\displaystyle\left(\prod_{1\leq j\leq i}m'_{j}\right)\left(\prod_{1\leq j\leq k-i}x_jg_{j}\right)\left(\prod_{1\leq j\leq n-k}g_{j}\right)\prod_{\omega\in D'''}\omega$$
where $D''$ and $D'''$ are sets of vertices. Then we have $f\in I^{k}L^{n-k}$.     
Therefore, from Remark~\ref{Craigmethod} we have $J^{(n)}=J^{n}$ for all $n\geq 1$. 
\end{proof}
\begin{rem}
Theorem~\ref{complete} is not true without the assumption of completeness. For example consider the hypergraph $\mathcal{H}$ over the vertex set $V(K)=\{x_1,\dots,x_6\}$ with the hyperedges $\{x_1,x_2,x_3\},\{x_3,x_4,x_5\},\{x_5,x_6,x_2\}$. Clearly $\mathcal{H}$ is a 3-uniform, 3-partite hypegraph but not complete. And we have $I(\mathcal{H})^{(2)}=I(\mathcal{H})^{2}+(x_1x_2x_3x_4x_5x_6)$.   
\end{rem}
\begin{prop}\label{myprop}
Let $\mathcal{H}$ be a simple $r$-uniform and $r$-partite hypergraph and $I=I(\mathcal{H})$ be its edge ideal. Then the Waldschmidt constant of $I$ (i.e. $\widehat{\alpha}(I)$) is $r$.
\end{prop}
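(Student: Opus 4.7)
The plan is to sandwich $\widehat{\alpha}(I)$ between two bounds that both equal $r$, using the elementary structure of symbolic powers of squarefree monomial ideals together with the $r$-partite hypothesis.

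For the upper bound $\widehat{\alpha}(I)\leq r$, the idea is essentially tautological: since $\mathcal{H}$ is $r$-uniform, every minimal generator of $I$ has degree $r$, so $\alpha(I^{n})\leq rn$. The standard containment $I^{n}\subseteq I^{(n)}$ gives $\alpha(I^{(n)})\leq\alpha(I^{n})\leq rn$, and dividing by $n$ and passing to the limit yields $\widehat{\alpha}(I)\leq r$.

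For the lower bound $\widehat{\alpha}(I)\geq r$, I would invoke the primary decomposition of $I^{(n)}$ for a squarefree monomial ideal: the associated primes of $R/I$ are precisely the primes $P_{C}=(x_{j}:j\in C)$, where $C$ ranges over the minimal vertex covers of $\mathcal{H}$, and $I^{(n)}=\bigcap_{C}P_{C}^{n}$. A monomial $x^{a}$ lies in $P_{C}^{n}$ if and only if $\sum_{j\in C}a_{j}\geq n$. The key observation is that the $r$-partite hypothesis hands us $r$ pairwise disjoint vertex covers $X_{1},\dots,X_{r}$ whose union is $V(\mathcal{H})$. Each $X_{i}$ is a vertex cover by definition, so it contains some minimal vertex cover $C_{i}$ (obtained by removing vertices from $X_{i}$ one at a time while preserving the covering property). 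Thus for any $x^{a}\in I^{(n)}$ we have
$$\sum_{j\in X_{i}}a_{j}\;\geq\;\sum_{j\in C_{i}}a_{j}\;\geq\;n$$
for every $i$, and summing on $i$ using the disjointness $V(\mathcal{H})=\bigsqcup_{i=1}^{r}X_{i}$ yields $\deg(x^{a})\geq rn$. Hence $\alpha(I^{(n)})\geq rn$, so $\widehat{\alpha}(I)\geq r$.

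Combining the two bounds gives $\widehat{\alpha}(I)=r$. There is no real obstacle in this argument; the only point that needs a line of justification is that each part $X_{i}$ contains a minimal vertex cover, and this is an elementary peeling argument. The essential content is that the $r$-partite structure produces $r$ disjoint vertex-cover constraints, each forcing a full unit of degree-weight $n$ from a variable-disjoint block, which is exactly what matches the generic upper bound of $rn$ coming from $r$-uniformity.
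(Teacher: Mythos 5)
Your proof is correct and takes essentially the same route as the paper: the paper also obtains the lower bound $\deg(x^a)\geq rm$ for $x^a\in I^{(m)}$ from the $r$ pairwise disjoint vertex covers $X_1,\dots,X_r$ (asserting directly that such a monomial lies in $I(X_i)^m$ for each $i$ and multiplying the resulting coprime factors), and the upper bound from $I^m\subseteq I^{(m)}$. The only difference is that you explicitly justify the key step $\sum_{j\in X_i}a_j\geq n$ via the decomposition $I^{(n)}=\bigcap_C P_C^n$ over minimal vertex covers and a peeling argument, a detail the paper leaves implicit.
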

\begin{proof}
Since $\mathcal{H}$ is a $r$-partite hypergraph then we can say $V(\mathcal{H})=X_1\cup\dots \cup X_r$ where $(X_1,\dots,X_r)$ is the 
$r$-partition of $\mathcal{H}$. Pick a monomial $\n\in I^{(m)}$ where $m$ is an integer. Since each $X_i$ for $i=1,2,\dots, r$ is a vertex cover of $\mathcal{H}$, then $\n\in I(X_i)^m$ (where $I(X_i)$ is an ideal generated by all variables belonging to $X_i$) and so there is a monomial generator $\n_i\in I(X_i)^m$ such that $\n_i$ divides $\n$. Since the $X_i$s are pairwise disjoint, then the $\n_i$s are pairwise coprime and then we can say $\displaystyle\prod_{i=1}^{r} \n_i$ divides $\n$ and then we have 
$\text{degree}(\n)\geq rm$. Now since $I^{m}\subset I^{(m)}$ for each $m\geq 1$ we can say $\alpha(I^{(m)})=rm$ for each $m\geq 1$. Therefore
$$\widehat{\alpha}(I)=\displaystyle\lim_{m\rightarrow\infty}\frac{\alpha(I^{(m)})}{m}=r.$$ 
\end{proof}
To show the next corollary we need the following theorem \cite{Adm2016}.
\begin{thm}[\cite{Adm2016}, Theorem 4.6]\label{Adam'Theorem}
Let $\mathcal{H}$ be a hypergraph with a non-trivial hyperedge, and let $I = I(\mathcal{H})$ be its edge ideal. Then
\begin{align*}
\widehat{\alpha}(I)=\displaystyle\frac{\chi^{*}(\mathcal{H})}{\chi^{*}(\mathcal{H})-1}
\end{align*} 
where $\chi^{*}(\mathcal{H})$ is the fractional chromatic number of $\mathcal{H}$. 
\end{thm}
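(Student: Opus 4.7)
The plan is to reduce the computation of $\widehat{\alpha}(I)$ to a linear program and then to identify that LP with the ratio $\chi^*/(\chi^*-1)$ via LP duality. Since $I=I(\mathcal{H})$ is a squarefree monomial ideal, its primary decomposition is $I=\bigcap_{C} \p_{C}$, where $C$ ranges over the minimal vertex covers of $\mathcal{H}$ and $\p_{C}$ is the prime generated by the variables indexed by $C$. Consequently $I^{(n)}=\bigcap_{C}\p_{C}^{n}$, so a monomial $\prod_{v}x_{v}^{a_{v}}$ lies in $I^{(n)}$ iff $\sum_{v\in C}a_{v}\geq n$ for every minimal vertex cover $C$. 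Therefore
\[
\alpha(I^{(n)})=\min\Bigl\{\textstyle\sum_{v}a_{v}:a\in\mathbb{N}^{V},\;\sum_{v\in C}a_{v}\geq n\;\forall C\Bigr\}.
\]

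Next I would pass from this integer program to its linear relaxation. Scaling shows the LP value equals $n\cdot M$, where
\[
M=\min\Bigl\{\textstyle\sum_{v}a_{v}:a\in\mathbb{R}_{\geq 0}^{V},\;\sum_{v\in C}a_{v}\geq 1\;\forall C\Bigr\}.
\]
A rounding argument (round up an optimal LP solution and use subadditivity of $\alpha$ along with $\alpha(I^{(n)})\leq n\,\alpha(I^{(1)})$) gives $\alpha(I^{(n)})/n\to M$, so $\widehat{\alpha}(I)=M$. Applying LP duality to $M$ yields
\[
\widehat{\alpha}(I)=\max\Bigl\{\textstyle\sum_{C}y_{C}:y\in\mathbb{R}_{\geq 0}^{\{C\}},\;\sum_{C\ni v}y_{C}\leq 1\;\forall v\Bigr\},
\]
a packing LP over the minimal vertex covers of $\mathcal{H}$.

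The final step is to identify this dual LP with $\chi^{*}(\mathcal{H})/(\chi^{*}(\mathcal{H})-1)$ through the complementation bijection $C\leftrightarrow V\setminus C$ between minimal vertex covers and maximal independent sets of $\mathcal{H}$. Given an optimal fractional coloring $\{x_{I}\}$ with $\sum_{I}x_{I}=\chi^{*}$ and $\sum_{I\ni v}x_{I}\geq 1$, setting $y_{C}:=x_{V\setminus C}/(\chi^{*}-1)$ produces a dual-feasible vector of value $\chi^{*}/(\chi^{*}-1)$ (feasibility uses $\sum_{C\ni v}x_{V\setminus C}=\sum_{I\not\ni v}x_{I}\leq\chi^{*}-1$), showing $\widehat{\alpha}(I)\geq\chi^{*}/(\chi^{*}-1)$. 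Conversely, given an optimal dual solution $\{y_{C}\}$ summing to $\widehat{\alpha}(I)$, the assignment $x_{I}:=y_{V\setminus I}/(\widehat{\alpha}(I)-1)$ (defined on maximal independent sets) is a fractional coloring of weight $\widehat{\alpha}(I)/(\widehat{\alpha}(I)-1)$, yielding the reverse inequality $\chi^{*}(\mathcal{H})\leq\widehat{\alpha}(I)/(\widehat{\alpha}(I)-1)$, which rearranges to $\widehat{\alpha}(I)\leq\chi^{*}/(\chi^{*}-1)$.

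The main obstacle I anticipate is not any single step but the careful verification that the limit defining $\widehat{\alpha}(I)$ actually coincides with the LP optimum $M$, which requires showing the rounding gap vanishes and that $\widehat{\alpha}(I)-1>0$ (guaranteed by the hypothesis of a non-trivial hyperedge, so that $\chi^{*}(\mathcal{H})>1$ and the denominators above are nonzero). The interplay between the packing LP and fractional colorings via complementation is the conceptual heart of the argument, and writing it out requires restricting to maximal independent sets (equivalently, minimal vertex covers) on both sides so that the bijection $I\leftrightarrow V\setminus I$ matches the correct index sets.
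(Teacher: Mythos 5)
This statement is quoted in the paper as an external result from the cited reference; the paper itself gives no proof of it, so there is nothing internal to compare against. Your argument is correct and is essentially the proof given in that reference: express $\alpha(I^{(n)})$ as an integer program via the decomposition $I^{(n)}=\bigcap_{C}\p_{C}^{n}$ over minimal vertex covers, pass to the LP relaxation (a rational optimal vertex scaled by a common denominator, together with $\alpha(I^{(n)})\geq nM$ and Fekete subadditivity, closes the rounding gap), dualize, and identify the resulting packing LP with the fractional chromatic LP through the complementation bijection between minimal vertex covers and maximal independent sets. The only points deserving explicit care are the ones you already flag: an optimal fractional coloring may be assumed supported on maximal independent sets (enlarging an independent set only relaxes the covering constraints), and the non-trivial hyperedge hypothesis is what guarantees $\chi^{*}(\mathcal{H})>1$ and $\widehat{\alpha}(I)>1$ so that both rearrangements via the involution $t\mapsto t/(t-1)$ are legitimate.
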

\begin{col}
Let $\mathcal{H}$ be a simple $r$-uniform, $r$-partite hypergraph with a non-trivial hyperedge. Then the fractional chromatic number of the hypergraph $\mathcal{H}$ is
$$\chi^{*}(\mathcal{H})=\displaystyle\frac{r}{r-1}.$$ 
\end{col}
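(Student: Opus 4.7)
The plan is to obtain this corollary as an immediate consequence of the two preceding results: Proposition~\ref{myprop}, which identifies the Waldschmidt constant as $r$, and Theorem~\ref{Adam'Theorem}, which expresses the Waldschmidt constant in terms of the fractional chromatic number. Since the hypergraph is $r$-uniform, $r$-partite, and has a non-trivial hyperedge, both hypotheses are in force, so both formulas are applicable to $I = I(\mathcal{H})$ simultaneously.

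First I would invoke Proposition~\ref{myprop} to record $\widehat{\alpha}(I) = r$. Next I would invoke Theorem~\ref{Adam'Theorem} to write
\[
\widehat{\alpha}(I) = \frac{\chi^{*}(\mathcal{H})}{\chi^{*}(\mathcal{H}) - 1}.
\]
Equating these two expressions gives $r = \chi^{*}(\mathcal{H})/(\chi^{*}(\mathcal{H}) - 1)$, and clearing denominators yields $r\,\chi^{*}(\mathcal{H}) - r = \chi^{*}(\mathcal{H})$, i.e. $(r-1)\chi^{*}(\mathcal{H}) = r$, so that $\chi^{*}(\mathcal{H}) = r/(r-1)$, as claimed.

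The only minor technical point to check is that the algebra is legitimate: in particular that $\chi^{*}(\mathcal{H}) \neq 1$ so that the formula from Theorem~\ref{Adam'Theorem} makes sense and can be inverted. This is automatic since the hypergraph has at least one non-trivial hyperedge (so $\widehat{\alpha}(I)$ is finite and, as computed, equals $r \geq 2$), which forces $\chi^{*}(\mathcal{H}) > 1$. There is no real obstacle here; the content of the corollary lies entirely in the two results being combined, and the argument is a one-line algebraic manipulation.
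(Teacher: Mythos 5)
Your proof is correct and follows exactly the paper's route: the paper's own proof simply cites Theorem~\ref{Adam'Theorem} and Proposition~\ref{myprop} and declares the claim settled, and you have filled in the same one-line algebraic manipulation. The extra remark about $\chi^{*}(\mathcal{H})>1$ is a reasonable sanity check but does not change the argument.
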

\begin{proof}
Let $I$ be the edge ideal associated with $\mathcal{H}$. Theorem~\ref{Adam'Theorem} and Proposition~\ref{myprop} settle
the claim. 

\end{proof}
In order to prove one of the main results of this section we need the following definition. 
\begin{defn}
A simplicial cycle  $K$ is called the {\bf bad hypergraph of length 3} if $V(K)=\{x_1,\dots,x_6\}$ and the hyperedges of $K$ are 
$$\{x_1,x_2,x_3\},\{x_3,x_4,x_5\},\{x_5,x_6,x_2\}.$$ 
\end{defn}
\definecolor{zzttqq}{rgb}{0.6,0.2,0.}
\begin{tikzpicture}[line cap=round,line join=round,>=triangle 45,x=0.8cm,y=0.8cm]
\clip(-2.3,-1.98) rectangle (14,4);
\fill[line width=2.pt,color=zzttqq,fill=zzttqq,fill opacity=0.10000000149011612] (8.7,1.16) -- (6.28,1.08) -- (7.42,3.32) -- cycle;
\fill[line width=2.pt,color=zzttqq,fill=zzttqq,fill opacity=0.10000000149011612] (6.28,1.08) -- (7.72,-1.42) -- (5.12,-1.5) -- cycle;
\fill[line width=2.pt,color=zzttqq,fill=zzttqq,fill opacity=0.10000000149011612] (8.7,1.16) -- (7.72,-1.42) -- (10.24,-1.32) -- cycle;
\draw [line width=2.pt,color=zzttqq] (8.7,1.16)-- (6.28,1.08);
\draw [line width=2.pt,color=zzttqq] (6.28,1.08)-- (7.42,3.32);
\draw [line width=2.pt,color=zzttqq] (7.42,3.32)-- (8.7,1.16);
\draw [line width=2.pt,color=zzttqq] (6.28,1.08)-- (7.72,-1.42);
\draw [line width=2.pt,color=zzttqq] (7.72,-1.42)-- (5.12,-1.5);
\draw [line width=2.pt,color=zzttqq] (5.12,-1.5)-- (6.28,1.08);
\draw [line width=2.pt,color=zzttqq] (8.7,1.16)-- (7.72,-1.42);
\draw [line width=2.pt,color=zzttqq] (7.72,-1.42)-- (10.24,-1.32);
\draw [line width=2.pt,color=zzttqq] (10.24,-1.32)-- (8.7,1.16);
\draw (7.22,4.1) node[anchor=north west] {$x_1$};
\draw (8.78,1.68) node[anchor=north west] {$x_2$};
\draw (5.5,1.4) node[anchor=north west] {$x_3$};
\draw (4.64,-1.5) node[anchor=north west] {$x_4$};
\draw (7.62,-1.44) node[anchor=north west] {$x_5$};
\draw (10.3,-1.14) node[anchor=north west] {$x_6$};
\end{tikzpicture}
\begin{obs}\label{myobs}
Let $\mathcal{H}$ be a simple 3-uniform, 3-partite hypergraph and $I=I(\mathcal{H})$ be its edge ideal. If $I^{(2)}\neq I^{2}$, then there is a subhypergraph in $\mathcal{H}$ which is isomorphic to the bad hypergraph of length 3. 
\end{obs}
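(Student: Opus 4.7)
The plan is to take a monomial witness $M\in I^{(2)}\setminus I^2$ of smallest total degree, reduce to the case where $M$ is squarefree, and then extract three hyperedges of $\mathcal{H}$ forming a bad subhypergraph of length $3$ from the support of $M$.

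Squarefreeness of $M$ would follow by a minimality argument. If $v^2\mid M$ for some variable $v$, then using Sullivant's Lemma (Lemma~\ref{sulivan}) I would check that either $M/v$ still lies in $I^{(2)}$ -- in which case the minimality of the degree forces $M/v\in I^2$ and hence $M=v(M/v)\in I^2$, a contradiction -- or else every hyperedge of $\mathcal{H}$ contained in $\mathrm{supp}(M)$ passes through $v$, and a short analysis combining Sullivant's criterion with $M\notin I^2$ reduces this configuration to $M\in I^2$ as well, another contradiction.

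Writing $S=\mathrm{supp}(M)$, Sullivant's criterion then guarantees, for every $x\in S$, a hyperedge $E_x\in\mathcal{H}$ contained in $S\setminus\{x\}$. Because $M\notin I^2$ and $M$ is squarefree, no two hyperedges of $\mathcal{H}$ lying inside $S$ can be disjoint -- otherwise their product would be a squarefree degree-$6$ monomial dividing $M$. Since $\mathcal{H}$ is $3$-uniform and $3$-partite with parts $X_1,X_2,X_3$, each hyperedge in $S$ uses exactly one vertex from each $X_j$; combined with the existence of the edges $E_x$ this forces $|S\cap X_j|\ge 2$ for every $j\in\{1,2,3\}$.

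Finally, I pick any hyperedge $E_0=\{a_1,b_1,c_1\}\subset S$ with $a_1\in X_1$, $b_1\in X_2$, $c_1\in X_3$, and introduce the three avoiding hyperedges $F_a:=E_{a_1}$, $F_b:=E_{b_1}$, $F_c:=E_{c_1}$ inside $S$. Each of $F_a,F_b,F_c$ disagrees with $E_0$ in one prescribed coordinate, and by the no-disjoint-pair condition every pair among $\{E_0,F_a,F_b,F_c\}$ shares at least one vertex. A case analysis on which coordinate each of $F_a,F_b,F_c$ agrees with $E_0$ in (coordinate $2$ or $3$ for $F_a$; coordinate $1$ or $3$ for $F_b$; coordinate $1$ or $2$ for $F_c$) produces, in every configuration, three distinct hyperedges from $\{E_0,F_a,F_b,F_c\}$ whose pairwise intersections are three distinct singleton vertices, which is precisely the structure of a bad subhypergraph of length $3$. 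The main obstacle is this final case analysis: when some $F$-edge shares two vertices with $E_0$ (a ``double agreement''), the triple $\{F_a,F_b,F_c\}$ itself may fail to deliver three singleton pairwise intersections and one must substitute $E_0$ in place of that $F$-edge in the bad-triangle triple; the no-disjoint-pair condition is invoked repeatedly to pin down the ambiguous coordinate values so that the three required distinct singleton intersections always emerge.
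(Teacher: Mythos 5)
Your proposal is correct, and at its core it runs on the same engine as the paper's proof: Sullivant's criterion (Lemma~\ref{sulivan}) supplies, for each suitable vertex, a hyperedge avoiding it; the hypothesis $M\notin I^2$ forbids two disjoint hyperedges inside the support; and a case analysis on pairwise intersection patterns, using $3$-partiteness, yields the bad triangle. The organizational differences are nonetheless real. First, you preprocess by passing to a minimal-degree witness and arguing it is squarefree, whereas the paper works with an arbitrary witness $X^a$ and instead repeatedly selects vertices whose exponent in $X^a$ is exactly $1$ (such vertices always exist inside any relevant edge, again because $X^a\notin I^2$). Your reduction is valid --- in the delicate subcase where $\deg_v(M)=2$ and every edge inside $\mathrm{supp}(M)$ passes through $v$, writing each such edge as $\{v\}\cup f$ with $f$ a pair meeting the other two parts and applying Sullivant's criterion to the low-degree vertices of a fixed $f$ does force a product of two such edges to divide $M$ --- but this is a genuine sub-lemma that you only gesture at, and it buys cleaner bookkeeping later at the price of proving it. Second, you anchor the final analysis on a base edge $E_0$ and the three avoiders $F_a,F_b,F_c$ of its vertices, while the paper builds a chain $e,e',e''$ and then patches the configurations with a common vertex or a size-two intersection by adjoining a fourth edge. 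I verified that your anchored analysis closes: using only that each $F$ misses its designated vertex of $E_0$ and that no two of the four edges are disjoint, every configuration contains a triple with three distinct singleton pairwise intersections (in the all-singleton case, two of the three possible ``size-two intersection'' escapes would force an edge disjoint from $E_0$, and the $|E_0\cap F|=2$ cases collapse similarly). So the route is sound; just be aware that the two steps you summarize as ``a short analysis'' and ``a case analysis'' carry essentially all the content of the observation, and written out they are no shorter than the paper's own argument.
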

\begin{proof}
To prove this observation we use the following notation. If $e=\{x_1,\dots,x_r\}$ is a hyperedge of $\mathcal{H}$ we define the monomial $e$ as follow
$$e=\prod_{i=1}^{r} x_i.$$
In this proof we freely identify the hyperedge $e$ and the monomial $e$. We also identify vertices and variables. 

Suppose $(X,Y,Z)$ is a 3-partition of $\mathcal{H}$. Since $I^{(2)}\neq I^2$ then we can pick the monomial $X^{a}\in I^{(2)}\backslash I^2$. Pick a variable $x$ dividing $X^{a}$. 
From Lemma~\ref{sulivan} we have $\frac{X^{a}}{x}\in I$. Thus there is a hyperedge $e$ in $\mathcal{H}$ such that 
$e$ divides $\frac{X^{a}}{x}$. Since $X^{a}\notin I^2$ there is a variable $y$ in $e$ such that $y^2$ does not divide $X^{a}$. 

Then we can pick a variable $y$ such that $y^2$ does not divide $X^{a}$. Therefore, by using Lemma~\ref{sulivan} we can write 
there is a hyperedge $e'$ in $\mathcal{H}$ such that 
$e'$ divides $\frac{X^{a}}{y}$. Note that $e\neq e'$ because $y\in e\backslash e'$. Also note that $e\cap e'\neq \emptyset$ because otherwise $e$ and $e'$ are coprime and then $ee'$ divides $X^{a}$ and then $X^{a}\in I^2$ which is a contradiction.  Thus we can pick a variable $z\in e\cap e'$ such that $z^2$ doesn't divide $X^a$ because otherwise $ee'$ will divide $X^{a}$ and then $X^{a}\in I^2$ again it is a contradiction.  So we pick a variable $z\in e\cap e'$ such that $z^2$ doesn't divide $X^{a}$. 
From Lemma~\ref{sulivan} we can conclude that there is a hyperedge $e''$ in $\mathcal{H}$ such that 
$e''$ divides $\frac{X^{a}}{z}$. Again note that $e''\notin \{e,e'\}$ because $z\in e\cap e'$ but $z$ doesn't belong to $e''$. Also since $X^{a}\notin I^2$ we can write $e,e'$ and $e''$ intersect pairwise. 

Now we are ready to write the following claim.
\begin{clm} 
Let $\mathcal{H}$ be a simple 3-uniform, 3-partite hypergraph and $I=I(\mathcal{H})$ be its edge ideal. If $I^{(2)}\neq I^{2}$, then the hypergraph $\mathcal{H}$ has at least three hyperedges which intersect pairwise and they have at most one variable in common. 
\end{clm}
\begin{proof}
To show this we consider $e,e'$ and $e''$ from the above. If they have at most one variable in common we are done so we assume they have two variables in common say $w_1,w_2$ one of $w_1$ or $w_2$ (say $w_1$) must be squarefree in $X^{a}$ (i.e. 
$w_1^2$ does not divide $X^{a}$). Hence from Lemma~\ref{sulivan}
and by using the fact that $w_1$ is a squarefree factor of $X^{a}$ we can say that there is an edge $e_{0}$ dividing $\frac{X^{a}}{w_1}$
which is not one of $e,e',e''$ (since otherwise $X^a\in I^2$ which is a contradiction). Now note that we can say $e_{0}$ intersects $e,e',e''$ and also since $w_1\notin e_0$ they have at most one variable in common. So we just need to replace one of $e,e',e''$ by $e_{0}$ and it settles the proof. 
\end{proof}
Therefore, we have shown $e,e'$ and $e''$ have at most one variable in common. We consider the following cases: 
\begin{enumerate}
\item Suppose $e,e'$ and $e''$ have no variable in common. If two of them say $e$ and $e'$ have two variables in common we can write $e\cap e'=\{x,y\}$. Without loss of generality we assume $x\in X$ and $y\in Y$. Since the hypergraph $\mathcal{H}$ is 3-uniform and 3-partite we can conclude that $e=\{x,y,z_1\}$ and $e'=\{x,y,z_2\}$ such that $z_1,z_2\in Z$. Since we know $e''$ intersects $e$ and $e'$ and $e,e',e''$ have no vertices in common we can conclude that $z_1,z_2\in e''$ which is a contradiction because $\mathcal{H}$ is 3-partite. Therefore 
$$|e\cap e'|=|e\cap e''|=|e'\cap e''|=1$$ 
and since $e,e'$ and $e''$ have no variable in common we can write $e\cap e'\neq e\cap  e''\neq e'\cap e''$.  So it is clear that the subhypergraph $\left\langle e,e',e''\right\rangle$ of $\mathcal{H}$ is the bad hypergraph of length 3. 
\item Suppose $e,e'$ and $e''$ have exactly one variable in common say $w$. Now we consider the followings
\begin{enumerate}
\item Suppose $e\cap e'=e\cap e''=e'\cap e''=\{w\}.$ Again since $X^{a}\notin I^{2}$ we have $w^2$ doesn't divide $X^a$ and since $X^{a}\in I^{(2)}$ from Lemma~\ref{sulivan} there is $e_0\in I$ such that $e_0$ divides $X^{a}/w$. Since $w^2$ doesn't divide $X^a$ we can write $w\notin e_0$ and then $e_0\notin \{e,e',e''\}$. Also since $X^{a}\notin I^2$ we can conclude that $e_0$ intersects $e,e'$ and $e''$. Since $e,e',e''$ have just one variable in common (i.e $w$) and $w\notin e_0$ if we consider $e_0$ with two of $e,e',e''$ (say $e,e'$) we can conclude that $e,e',e_0$ are three hyperedges in $\mathcal{H}$ which have no variable in common. Therefore, from part (1) we can say the subhypergraph   $\left\langle e,e',e_0\right\rangle$ of $\mathcal{H}$ is the bad hypergraph of length 3.
\item Suppose $|e\cap e'|=|e\cap e''|=|e'\cap e''|=2$. Since $\mathcal{H}$ is 3-partite and $e,e'$ and $e''$ have exactly one vertex in common then this case never happens. So we assume at least one couple of $e,e',e''$ intersecting each other at just $w$ (say $e\cap e''=\{w\}$). We assume $e\cap e'=\{w,z_1\}$. Note that $w^2$ does not divide $X^{a}$ because otherwise $ee''$ divides $X^{a}$ and then $X^{a}\in I^{2}$ which is a contradiction. Hence, from Lemma~\ref{sulivan} we can say there is $e_0\in I$ dividing $\frac{X^{a}}{w}$. Obviously $e_0,e,e''$ have no variable in common therefore, from part (1) we can say the subhypergraph   $\left\langle e,e'',e_0\right\rangle$ of $\mathcal{H}$ is the bad hypergraph of length 3.   
\end{enumerate}

\end{enumerate} 
 \end{proof}
\begin{cor}\label{mycol}
Let $\mathcal{H}$ be a 3-uniform, 3-partite hypergraph and $I=I(\mathcal{H})$ be its edge ideal. Suppose $\mathcal{H}$ has no bad subhypergraph, then $I^{(2)}=I^2$. 
\end{cor}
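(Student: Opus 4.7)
The plan is to observe that this corollary is essentially the contrapositive of the preceding Observation~\ref{myobs}, so the proof should be a short deduction rather than a new argument.

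First, I would note that the inclusion $I^2 \subseteq I^{(2)}$ is standard and holds for every ideal in a Noetherian domain, since every associated prime $\mathfrak{p}$ of $R/I$ satisfies $I^2 \subseteq I^2 R_{\mathfrak{p}} \cap R$. So the entire content of the corollary lies in proving the reverse inclusion $I^{(2)} \subseteq I^2$.

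Next, I would argue by contrapositive. Suppose $I^{(2)} \neq I^2$; then combined with the inclusion above, we have $I^{(2)} \supsetneq I^2$. Apply Observation~\ref{myobs} directly: this gives a subhypergraph of $\mathcal{H}$ isomorphic to the bad hypergraph of length~3. This contradicts the hypothesis on $\mathcal{H}$.

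There is no real obstacle here, since all the combinatorial work has already been done in Observation~\ref{myobs}; the corollary is simply its contrapositive packaged with the trivial containment $I^2 \subseteq I^{(2)}$. The only thing one must be careful about is stating the logical direction clearly, since the observation is phrased as an implication going from inequality of powers to existence of a bad subhypergraph, and we want the converse implication on the \emph{absence} side.
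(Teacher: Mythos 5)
Your proposal is correct and matches the paper's (implicit) reasoning exactly: the corollary is stated as an immediate consequence of Observation~\ref{myobs}, being its contrapositive combined with the standard containment $I^2 \subseteq I^{(2)}$. Nothing further is needed.
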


\begin{thm}
Let $\mathcal{H}$ be a 3-uniform and 3-partite hypergraph and $I=I(\mathcal{H})$ be its edge ideal. Then $I^{(2)}=I^2$ if and only if one of the following holds
\begin{enumerate}
\item $\mathcal{H}$ has no bad subhypergraph of length 3;
\item If $\mathcal{H}$ has a bad subhypergraph of length 3, say $\mathcal{B}$, then there is a hyperedge $E$ of $\mathcal{B}$ such that $V(\mathcal{B})\backslash E$ is a hyperedge of $\mathcal{H}$
\end{enumerate}
\end{thm}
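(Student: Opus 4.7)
My plan is to prove the two implications separately, reusing the Sullivan--style criterion from Lemma~\ref{sulivan} together with the bad-subhypergraph extraction that underlies Observation~\ref{myobs}.

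For the $(\Leftarrow)$ direction I split into two cases. Case~(1) is immediate from Corollary~\ref{mycol}. For case~(2), pick any $m \in I^{(2)}$ and suppose toward contradiction that $m \notin I^2$. Running the argument in the proof of Observation~\ref{myobs} on $m$ produces three edges of $\mathcal{H}$ that form a bad subhypergraph $\mathcal{B}$; since each of those three edges was found as a divisor of some $m/x$, every vertex of $V(\mathcal{B})$ lies in $\operatorname{supp}(m)$. Hypothesis~(2) applied to $\mathcal{B}$ then produces a hyperedge $E \in \mathcal{B}$ whose complement $F := V(\mathcal{B}) \setminus E$ is also an edge of $\mathcal{H}$. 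Since $E \cap F = \emptyset$ and both edges are contained in $\operatorname{supp}(m)$, the squarefree product $E \cdot F = \prod_{v \in V(\mathcal{B})} v$ divides $m$, giving $m \in I \cdot I = I^2$, a contradiction.

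For the $(\Rightarrow)$ direction I prove the contrapositive: if both~(1) and~(2) fail, then $I^{(2)} \neq I^2$. Failure of~(1) yields a bad subhypergraph, and failure of~(2) lets me pick one, $\mathcal{B} = \{E_1, E_2, E_3\}$ on vertices $V(\mathcal{B}) = \{x_1, \ldots, x_6\}$, such that none of $V(\mathcal{B}) \setminus E_i$ is an edge of $\mathcal{H}$. I take the witness $m := x_1 x_2 x_3 x_4 x_5 x_6$. Lemma~\ref{sulivan} gives $m \in I^{(2)}$ readily: each vertex $x_i$ of the bad hypergraph lies in at most two of the three bad edges, so $m/x_i$ is still divisible by at least one $E_j$. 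To show $m \notin I^2$, I use that $m$ is squarefree of degree six, so any expression $m = uv$ with $u, v \in I$ forces $u$ and $v$ to be disjoint edges whose union is $V(\mathcal{B})$; being 3-partite they must be transversals of the induced partition $(X \cap V(\mathcal{B}),\,Y \cap V(\mathcal{B}),\,Z \cap V(\mathcal{B})) = (\{x_1,x_5\},\{x_2,x_4\},\{x_3,x_6\})$, and the only transversal pairs partitioning $V(\mathcal{B})$ that come from bad edges are the three $(E_i,\,V(\mathcal{B}) \setminus E_i)$, all ruled out by our hypothesis.

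The step I expect to be the main obstacle is the last one: besides the three $(E_i, V(\mathcal{B})\setminus E_i)$ pairs there is a fourth ``opposite'' transversal partition of $V(\mathcal{B})$, e.g.\ $(\{x_1,x_4,x_6\},\{x_2,x_3,x_5\})$, and if both of these triples happen to be edges of $\mathcal{H}$ then $m \in I^2$ even with our hypothesis in force. To close this gap I would observe that when the opposite triples are edges, new bad subhypergraphs appear in $\mathcal{H}$, such as $\{E_1, E_2, \{x_1,x_4,x_6\}\}$, for which the ``complement of a bad edge'' is precisely the other opposite triple $\{x_2,x_3,x_5\}$. A careful choice of $\mathcal{B}$ among the bad subhypergraphs witnessing the failure of~(2) -- equivalently, reading~(2) as ``$V(\mathcal{B})$ is the disjoint union of two edges of $\mathcal{H}$'', which subsumes all four transversal-pair partitions uniformly -- allows the same witness $m$ and argument to go through.
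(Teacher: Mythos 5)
Your argument follows the same route as the paper's: the $(\Leftarrow)$ direction via Corollary~\ref{mycol} and Observation~\ref{myobs} is identical, and your contrapositive for $(\Rightarrow)$ is just the paper's direct argument (take $X^{a}=\prod_{x\in V(\mathcal{B})}x$, note it lies in $I^{(2)}$, and analyse how a squarefree degree-six monomial can factor into two hyperedges) turned around. The one place you diverge is that you stop to worry about the final factorization step, and you are right to: the paper's proof simply asserts that since $\deg(X^{a})=6$ there must be a hyperedge $E$ of $\mathcal{B}$ with $V(\mathcal{B})\setminus E$ a hyperedge of $\mathcal{H}$, silently ignoring the fourth transversal partition you describe.

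That obstacle is not cosmetic, and your first proposed repair (a careful choice of $\mathcal{B}$ among the bad subhypergraphs witnessing the failure of (2)) cannot close it. Take $\mathcal{H}$ on $\{x_1,\dots,x_6\}$ with hyperedges $E_1=\{x_1,x_2,x_3\}$, $E_2=\{x_3,x_4,x_5\}$, $E_3=\{x_2,x_5,x_6\}$, $F_1=\{x_1,x_4,x_6\}$, $F_2=\{x_2,x_3,x_5\}$; this is simple, 3-uniform and 3-partite with parts $\{x_1,x_5\}$, $\{x_2,x_4\}$, $\{x_3,x_6\}$. The triple $\mathcal{B}=\{E_1,E_2,E_3\}$ is a bad subhypergraph for which none of the complements $V(\mathcal{B})\setminus E_i$ is a hyperedge, so condition (2) as stated fails, and no alternative choice of witness helps, since every other bad subhypergraph of $\mathcal{H}$ (for instance $\{E_1,E_2,F_1\}$) does satisfy (2). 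Yet $x_1x_2x_3x_4x_5x_6=F_1\cdot F_2\in I^2$, and by Observation~\ref{myobs} together with the first half of the argument every element of $I^{(2)}\setminus I^2$ would have to be a multiple of this monomial, so in fact $I^{(2)}=I^2$. Hence the theorem with (2) read literally is false, the paper's own proof of the forward direction breaks at exactly the step you flagged, and the only viable fix is the one in your final parenthesis: restate (2) as ``for every bad subhypergraph $\mathcal{B}$, the set $V(\mathcal{B})$ is the disjoint union of two hyperedges of $\mathcal{H}$,'' after which both your argument and the paper's go through unchanged.
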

\begin{proof}
If part (1) holds then from Corollary~\ref{mycol} we have $I^{(2)}=I^2$. Then we assume part(2) holds, we will show $I^{(2)}=I^2$. Suppose $I^{(2)}\neq I^2$. Then there is a monomial $X^{a}\in I^{(2)}\backslash I^2$. From Observation~\ref{myobs} we can say there is a bad subhypergraph of length 3, say $\mathcal{B}$, such that 
$$X^{a}=\left(\prod_{x\in V(\mathcal{B})} x\right)u$$
where $u$ is a monomial. From part (2) we know there is a hyperedge $E$ in $\mathcal{B}$ such that 
$$\prod_{x\in V(\mathcal{B})\backslash V(E)} x\in I.$$ 
Then we have 
$$X^{a}=\left(\prod_{x\in V(\mathcal{B})} x\right)u=\left(\prod_{x\in V(E)} x\right)\left(\prod_{x\in V(\mathcal{B})\backslash V(E)} x\right)u\in I^{2}$$
and it is a contradiction.  

Now we suppose $I^{(2)}=I^2$. If there is no bad subhypergraph of length 3 we have nothing to show. So we assume $\mathcal{H}$ has a bad subhypergraph of length 3 say $\mathcal{B}$. We consider the monomial $X^a= \displaystyle\prod_{x\in V(\mathcal{B})} x$.
It is straightforward to see that $X^a\in I^{(2)}$. Since $I^{(2)}=I^2$, we can conclude that $X^a\in I^2$ and since $\text{deg}(X^a)=6$ we can write there is a hyperedge $E$ in $\mathcal{B}$ such that $V(\mathcal{B})\backslash E$ is a hyperedge of $\mathcal{H}$. It settles our claim.   

\end{proof}

\begin{col}
Let $\mathcal{H}$ be a 3-uniform, 3-partite hypergraph and $I=I(\mathcal{H})$ be its edge ideal. Then we have 
$$I^{(2)}=I^2+\left(\displaystyle\prod_{x\in V(\mathcal{B})}x:\text{$\mathcal{B}$ is a bad subhypergraph of length 3} \right) $$
\end{col}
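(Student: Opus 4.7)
The plan is to establish both containments. For $I^{2}+\bigl(\prod_{x\in V(\mathcal{B})}x : \mathcal{B}\text{ bad of length }3\bigr)\subseteq I^{(2)}$, the inclusion $I^{2}\subseteq I^{(2)}$ is automatic, so it suffices to show that for every bad subhypergraph $\mathcal{B}$ of length $3$ the squarefree monomial $\prod_{x\in V(\mathcal{B})}x$ lies in $I^{(2)}$. Using the labelling from the definition, with vertices $x_{1},\dots,x_{6}$ and hyperedges $\{x_{1},x_{2},x_{3}\},\{x_{3},x_{4},x_{5}\},\{x_{5},x_{6},x_{2}\}$, Lemma~\ref{sulivan} reduces the claim to the six-case verification that each monomial $\bigl(\prod_{j=1}^{6}x_{j}\bigr)/x_{i}$ is divisible by one of the three hyperedge-monomials of $\mathcal{B}$. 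This is purely combinatorial and immediate from the adjacency pattern of the bad hypergraph.

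For the other direction, I would take a monomial $X^{a}\in I^{(2)}$. If $X^{a}\in I^{2}$ it already lies in the right-hand ideal, so assume $X^{a}\in I^{(2)}\setminus I^{2}$. I would then appeal not only to the statement but to the construction in the proof of Observation~\ref{myobs}: that argument produces three hyperedges $e,e',e''$ whose union is a bad subhypergraph $\mathcal{B}\subseteq\mathcal{H}$, and each of these edges was obtained by applying Lemma~\ref{sulivan} to $X^{a}$, so each is a divisor of a monomial of the form $X^{a}/v$ for some variable $v$ dividing $X^{a}$. In particular every vertex of $V(\mathcal{B})=V(e)\cup V(e')\cup V(e'')$ divides $X^{a}$, and hence the squarefree monomial $\prod_{x\in V(\mathcal{B})}x$ divides $X^{a}$. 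This places $X^{a}$ in the ideal on the right, completing the containment.

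The main obstacle is bookkeeping rather than conceptual: one needs to confirm that \emph{every} edge used in assembling $\mathcal{B}$ in Observation~\ref{myobs}, including the replacement edges $e_{0}$ that appear in the embedded Claim and in cases (1), (2a) and (2b), is also produced by Lemma~\ref{sulivan} as a divisor of some $X^{a}/w$ with $w\in\mathrm{supp}(X^{a})$, so that its vertex set is again contained in $\mathrm{supp}(X^{a})$. Once this tracking through the case analysis is made explicit, combining the two containments yields the claimed equality.
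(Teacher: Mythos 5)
Your proposal is correct and follows essentially the same route the paper implicitly relies on: the containment $\prod_{x\in V(\mathcal{B})}x\in I^{(2)}$ via Lemma~\ref{sulivan}, and the reverse containment via the construction in Observation~\ref{myobs}, whose edges are all obtained as divisors of $X^{a}/v$ and hence of $X^{a}$ (exactly the divisibility fact $X^{a}=\bigl(\prod_{x\in V(\mathcal{B})}x\bigr)u$ that the paper invokes in the proof of its main theorem). The bookkeeping you flag does go through, since every replacement edge $e_{0}$ in the Observation is likewise produced by Lemma~\ref{sulivan} as a divisor of some $X^{a}/w$.
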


\section{Path ideals}

Path ideals first were introduced by Conca and De Negri in \cite{conca1999}. In this section we will study symbolic powers of a cubic path ideal of a graph. First we will recall the definition of the path ideal. 
\begin{defn}
Let $G=(V,E)$ be a simple graph. A sequence of distinct vertices $v_1,v_2,\dots,v_n$ is called a path if $\{v_i,v_{i+1}\}$ is an edge in $G$ for each $1\leq i<n$. The number of edges in a path is defined as the length of that path. We define the {\bf path ideal} of $G$, denoted by $I_t(G)$, to be the
ideal of $k[V]$ generated by the monomials of the form
$x_{i_1}x_{i_2}\dots x_{i_t}$ where $x_{i_1},x_{i_2},\dots,x_{i_t}$ is
a path in $G$. 
\end{defn}
\begin{defn}
Let $G=(V,E)$ be a simple graph and $I_t(G)$ be a path ideal of length $t-1$ when $2\leq t$. Then we define the {\bf path hypergraph} as a hypergraph whose vertices are $V$ and whose edges consist of all paths of the length $t-1$. This hypergraph is denoted by $\mathcal{H}_t(G)$.
\end{defn}
From Theorem~\ref{Tai}, [\cite{Adam2010}, Theorem 2.7] and [\cite{Herzog2008}, Theorem 3.2] we have the following result. 
\begin{col}\label{Adam'thm}
Let $t\geq 2$ be an integer and $G$ be a rooted tree (i.e. A rooted tree is a tree with one vertex chosen as a root). The path hypergraph  $\mathcal{H}_t(G)$ is a simplicial tree (for details about simplicial trees see \cite{Faridi2011}). Moreover for the path ideal $J=I_t(G)$ we have $J^{(n)}=J^{n}$ for all $n\geq 1$.  
\end{col}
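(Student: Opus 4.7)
The plan is to decompose the corollary into its two assertions and establish each by quoting a previously cited result, then glue them together via Theorem~\ref{Tai}. Concretely, I would first isolate the combinatorial claim that $\mathcal{H}_t(G)$ is a simplicial tree, and then the algebraic claim that $J^{(n)}=J^n$ for all $n\ge 1$.

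For the first assertion, I would invoke \cite{Adam2010}, Theorem~2.7 essentially verbatim: that result states that whenever $G$ is a rooted tree and $t\ge 2$, the $t$-path hypergraph $\mathcal{H}_t(G)$ is a simplicial tree in the sense of Faridi. No further combinatorial manipulation would be required here, so the only thing to check is that the definition of ``rooted tree'' and of ``$t$-path hypergraph'' used in \cite{Adam2010} agrees with the ones adopted in the present paper; both are standard, so this is cosmetic.

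For the second assertion, my plan is to chain two results. By \cite{Herzog2008}, Theorem~3.2, the edge ideal of a simplicial tree is normally torsion-free, which, rephrased in hypergraph-theoretic language, is exactly the Mengerian property of Definition~\ref{min-max}. Since the first assertion has already identified $\mathcal{H}_t(G)$ as a simplicial tree, this tells us $\mathcal{H}_t(G)$ is Mengerian. Applying Theorem~\ref{Tai} then yields $J^{(n)}=J^n$ for every $n\ge 1$, which is the desired conclusion.

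The main obstacle is not a mathematical one but rather a matter of terminology reconciliation: one must confirm that ``simplicial tree'' as proved in \cite{Adam2010} is literally the notion hypothesized in \cite{Herzog2008}, Theorem~3.2, and that the normally-torsion-free conclusion of that theorem coincides with the Mengerian property as formulated in Definition~\ref{min-max} of this paper. Once these bookkeeping identifications are made, the corollary follows immediately without any additional computation, and no new ideas beyond the three cited theorems are required.
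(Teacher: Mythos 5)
Your proposal matches the paper exactly: the corollary is stated there as an immediate consequence of \cite{Adam2010}, Theorem~2.7 (the path hypergraph of a rooted tree is a simplicial tree), \cite{Herzog2008}, Theorem~3.2 (simplicial trees yield normally torsion-free, i.e.\ Mengerian, edge ideals), and Theorem~\ref{Tai}, with no further argument given. Your decomposition and chaining of these three results is precisely the intended proof.
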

Now we are ready to show when the $t$-path hypergraph of a cycle is $t$-partite.

\begin{thm}\label{partitePath}
Let $n\geq 2$ and $2\leq t\leq n$ be integers and $C_n$ be a cycle over $\{x_1,\dots,x_n\}$. Suppose $\mathcal{H}_{t}(C_n)$ be the $t$-path hypergraph of $C_n$. Then we have $\mathcal{H}_{t}(C_n)$ is a $t$-partite hypergraph if and only if $n\equiv 0 \pmod{t}$.    
\end{thm}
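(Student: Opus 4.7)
The plan is to exploit the fact that the hyperedges of $\mathcal{H}_t(C_n)$ are precisely the $n$ sets $E_i = \{x_i, x_{i+1}, \ldots, x_{i+t-1}\}$, indices read modulo $n$, obtained by sliding a window of length $t$ around the cycle. For the \textbf{if} direction, assuming $t \mid n$, I would set $X_j = \{x_i : i \equiv j \pmod t\}$ for $j = 0, 1, \ldots, t-1$. Any $t$ consecutive indices modulo $n$ hit each residue class modulo $t$ exactly once, so $|E_i \cap X_j| = 1$ for all $i, j$; the parts are pairwise disjoint and each $X_j$ meets every hyperedge, so it is a vertex cover. The condition $t \mid n$ is essential here to make the residue-class labelling globally consistent around the cycle.

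For the \textbf{only if} direction, suppose $(X_1, \ldots, X_t)$ is a $t$-partition of $\mathcal{H}_t(C_n)$ and define the coloring $c : \{x_1, \ldots, x_n\} \to \{1, \ldots, t\}$ by $c(x_i) = j$ iff $x_i \in X_j$. By the third axiom of $t$-partiteness, every hyperedge $E_i$ is rainbow, i.e. $\{c(y) : y \in E_i\} = \{1, \ldots, t\}$. Comparing two consecutive rainbow hyperedges $E_i$ and $E_{i+1}$, which (in the generic case $t < n$) share exactly the $t-1$ vertices $x_{i+1}, \ldots, x_{i+t-1}$, the missing color in this shared sub-palette is forced, so $c(x_{i+t}) = c(x_i)$ for every $i$ modulo $n$. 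Thus $c$ is invariant under the shift $i \mapsto i + t$ on $\mathbb{Z}/n\mathbb{Z}$.

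Being invariant under this shift, $c$ is constant on each of the $\gcd(n, t)$ orbits of the shift action, so it takes at most $\gcd(n, t)$ distinct values. On the other hand each $X_j$ is nonempty (being a vertex cover when there is a hyperedge, or equivalently because every rainbow hyperedge contains one vertex of each part), so $c$ is surjective and takes exactly $t$ values. Combining the two bounds gives $\gcd(n, t) \geq t$, and since $\gcd(n, t) \leq t$ always, we conclude $\gcd(n, t) = t$, i.e.\ $t \mid n$. The main obstacle I foresee is the degenerate case $t = n$, where $\mathcal{H}_t(C_n)$ has a single hyperedge and the sliding-window comparison yields no information; this has to be handled separately by observing that a single rainbow hyperedge on $t = n$ vertices already forces each part to be a singleton, after which $n \equiv 0 \pmod n$ is trivial.
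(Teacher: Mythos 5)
Your proof is correct, and its core mechanism is the same as the paper's: two consecutive sliding windows $E_i$ and $E_{i+1}$ overlap in $t-1$ vertices of a rainbow edge, so the colour of the two non-shared endpoints is forced to agree, i.e.\ $c(x_{i+t})=c(x_i)$. Where you diverge is in how this periodicity is exploited. The paper runs an explicit induction on the index $g$, showing that $x_g$ must lie in the part indexed by $g \bmod t$, and then derives $t \mid n$ from the single wrap-around hyperedge $\{x_n,x_1,\dots,x_{t-1}\}$, which forces $x_n\in X_t$. You instead observe that $c$ is invariant under the shift $i\mapsto i+t$ on $\mathbb{Z}/n\mathbb{Z}$, so it takes at most $\gcd(n,t)$ values, while surjectivity onto $t$ colours forces $\gcd(n,t)=t$; this is a cleaner finish that avoids bookkeeping the residues and automatically uses all the wrap-around edges at once. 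Two further points in your favour: you actually prove the \emph{if} direction (residue classes modulo $t$ give a valid $t$-partition when $t\mid n$), which the paper's proof silently omits even though the statement is an equivalence; and you isolate the degenerate case $t=n$, where there is only one hyperedge and the consecutive-window comparison is vacuous, noting that the conclusion $n\equiv 0\pmod{n}$ is trivial there. Both of these are genuine gaps in the paper's write-up that your argument fills.
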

\begin{proof} We assume $\mathcal{H}_{t}(C_n)$ is a $t$-partite hypergraph with the $t$-partition $(X_1,\dots,X_t)$. 
Since $\{x_1,x_2,\dots,x_{t}\}$ is a hyperedge of $\mathcal{H}_{t}(C_n)$, without loss of generality we can write $x_i\in X_i$ for $i=1,2,\dots, t$. We will show that for each positive integer $g\leq n$ if 
$g\not\equiv 0\pmod{t}$ we have $x_{g}\in X_i$ when $g\equiv i \pmod{t}$  and otherwise $x_g\in X_t$.    

If we assume $g\leq t$, it is obvious. So we assume that $g>t$. We proceed the proof by the induction on $g$. If $g=t+1$ since we have $\{x_2,x_3,\dots,x_{t+1}\}$ is a hyperedge of $\mathcal{H}_{t}(C_n)$ and because $x_i\in X_i$ for $i=2,3,\dots, t$ then we can conclude that $x_{t+1}\in X_1$ and it settles the base of the induction. 

We assume $g>t+1$ and $g\equiv a \pmod{t}$ where $1 \leq a\leq t$. 
We consider the hyperedge $E=\{x_{g-t+1},\dots,x_g\}$ of $\mathcal{H}_{t}(C_n)$. By the induction hypotheses we can write $x_{g-j}\in X_{k_j}$ for $j=1,2,\dots,t-1$ when $g-j\equiv k_j \pmod{t}$ and $k_j\in \{1,2,\dots,t\}=[t]$. Now note that for each $j$ we can say $k_j\neq a$, because otherwise there is $1\leq j\leq t$ such that 
$a+j\equiv a \pmod{t}$ and then $j\equiv 0 \pmod{t}$ and it is a contradiction. 
Then since $\mathcal{H}_{t}(C_n)$ is $t$-partite and $E$ is a hyperedge of $\mathcal{H}_t(C_n)$ we can write the $k_j$s are distinct and thus 
we can conclude that $x_g\in X_a$ and it settles our claim. 

On the other hand since $\{x_1,x_2,\dots,x_{t-1},x_n\}$ is a hyperedge of $\mathcal{H}_{t}(C_n)$ and $x_i\in X_i$ for each $i=1,2,\dots, t-1$ then because $\mathcal{H}_{t}(C_n)$ is $t$-partite we can conclude that $x_n\in X_t$ and then $n\equiv t\equiv 0 \pmod{t}$.   

\end{proof}
The following corollary is a direct conclusion of Theorem~\ref{Isabel} and  Theorem~\ref{partitePath}.  
\begin{col}
Let $n\geq 2$ and $2\leq t\leq n$ be integers and $C_n$ be a cycle over $\{x_1,\dots,x_n\}$ and let $\mathcal{H}_{t}(C_n)$ be  the $t$-path hypergraph of $C_n$ and $I=I_t(C_n)$ be the $t$-path ideal of $C_n$. If $I^{(r)}=I^{r}$ for each $r\geq 1$, then we have $n\equiv 0 \pmod{t}$. 
  
\end{col}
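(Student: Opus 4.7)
The statement is a corollary whose entire content is the chaining of two earlier results: Theorem~\ref{Isabel} (every hypergraph whose edge ideal satisfies $I^{(n)}=I^{n}$ for all $n\geq 1$ must be $r$-partite, where $r$ is the uniformity) and Theorem~\ref{partitePath} (which identifies exactly when $\mathcal{H}_t(C_n)$ is $t$-partite). So the plan is simply to verify that the hypotheses of Theorem~\ref{Isabel} apply to $\mathcal{H}_t(C_n)$ and then invoke Theorem~\ref{partitePath} as a black box.

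First I would observe that the path hypergraph $\mathcal{H}_t(C_n)$ is $t$-uniform: by definition its hyperedges are the paths of length $t-1$ in $C_n$, and such a path consists of exactly $t$ distinct vertices. Hence $\mathcal{H}_t(C_n)$ is a $t$-uniform hypergraph with edge ideal $I = I_t(C_n)$.

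Next, since we are assuming $I^{(r)} = I^{r}$ for all $r \geq 1$, Theorem~\ref{Isabel} (applied with $r = t$) tells us that $\mathcal{H}_t(C_n)$ is $t$-partite. Finally, Theorem~\ref{partitePath} says that $\mathcal{H}_t(C_n)$ is $t$-partite if and only if $n \equiv 0 \pmod{t}$, and in particular the ``only if'' direction yields $n \equiv 0 \pmod{t}$, as claimed.

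There is essentially no technical obstacle here: the two ingredients needed are both already available. The only minor point worth flagging is the uniformity check at the start, so that Theorem~\ref{Isabel} can be invoked with the correct value of $r$; beyond that, the proof is a single line of implication chasing.
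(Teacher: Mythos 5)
Your proposal is correct and matches the paper exactly: the paper states this corollary is a direct consequence of Theorem~\ref{Isabel} and Theorem~\ref{partitePath}, which is precisely the implication chain you give. The extra check that $\mathcal{H}_t(C_n)$ is $t$-uniform is a reasonable (if routine) addition.
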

In the following result we use [\cite{Adm2016}, Theorem 4.6], to give a formula to compute the Waldschmidt constant of the path ideals of a cycle. 
\begin{lem}\label{easylemma}
Let $C_n$ be a cycle graph over the vertex set $V=\{x_1,x_2,\dots,x_n\}$ and $2\leq t\leq n$ be an integer and let $\mathcal{H}_{t}(C_n)$ be the path hypergraph of $C_n$, then $\mathcal{H}_{t}(C_n)$ is a vertex-transitive hypergraph (see Definition~\ref{transitivity}). 
\end{lem}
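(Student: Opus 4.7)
The plan is to exploit the obvious rotational symmetry of the cycle $C_n$. Define the map $\pi: V \to V$ by $\pi(x_i) = x_{i+1}$ for $i = 1, \ldots, n-1$ and $\pi(x_n) = x_1$; equivalently, $\pi(x_i) = x_{(i \bmod n)+1}$. This is clearly a bijection on $V$.

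The first step is to check that $\pi$ is an automorphism of the cycle $C_n$: the edges of $C_n$ are $\{x_i, x_{i+1}\}$ for $i = 1, \ldots, n-1$ and $\{x_n, x_1\}$, and $\pi$ sends each such edge to another edge of $C_n$ by a direct index shift. Next, since paths of length $t-1$ in $C_n$ are determined by adjacency in $C_n$, any graph automorphism of $C_n$ carries paths of length $t-1$ to paths of length $t-1$. Hence $\pi$ restricts to a permutation on the hyperedge set of $\mathcal{H}_t(C_n)$, i.e. $\pi$ is an automorphism of the path hypergraph $\mathcal{H}_t(C_n)$.

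For the transitivity itself, given any two vertices $x_i$ and $x_j$ with $1 \leq i, j \leq n$, let $k = (j - i) \bmod n$. Then $\pi^k$ is an automorphism of $\mathcal{H}_t(C_n)$ (being a composition of automorphisms) and $\pi^k(x_i) = x_j$ by construction. This verifies the condition of Definition~\ref{transitivity}.

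The main (and only) obstacle is just bookkeeping with indices modulo $n$, which is completely routine. I do not foresee any substantive difficulty; the lemma is essentially a statement that cyclic rotation of vertex labels preserves the path structure, which follows immediately from the fact that path membership depends only on consecutive adjacencies in $C_n$.
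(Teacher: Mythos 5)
Your proof is correct and follows essentially the same approach as the paper: both arguments use the rotational symmetry of $C_n$, with the paper directly defining the shift $\pi(x_i)=x_{a_i}$, $a_i\equiv i+m-r\pmod n$, while you realize the same map as a power of the single-step rotation. Your version is slightly more explicit in verifying that the rotation preserves the hyperedges of $\mathcal{H}_t(C_n)$, a step the paper leaves implicit.
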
 
\begin{proof}
Pick $x_r,x_m\in V$. We define the automorphism $\pi:V\longrightarrow V$ on $\mathcal{H}_{t}(C_n)$ is given by $\pi(x_{i})=x_{a_i}$ where $x_{a_i}\in V$ and  
$a_i \equiv i+m-r \pmod{n}$. We clearly have $\pi(x_r)=x_m$ and it settles our claim.  

\end{proof} 
Now we are ready to show the following proposition. 
\begin{prop}
Let $C_n$ be a cycle graph over the vertex set $V=\{x_1,x_2,\dots,x_n\}$ and $2\leq t\leq n$ be an integer. Suppose $n=tq+r$ for $r=0,1,\dots, t-1$ and put $I=I_{t}(C_n)$. Then we have 
\[ \widehat{\alpha}(I)=\begin{cases} 
      t & r=0 \\
      \displaystyle\frac{n}{q+1} & r\neq 0 
   \end{cases}
\] 
\end{prop}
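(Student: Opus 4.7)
The plan is to combine the vertex-transitivity result (Lemma~\ref{easylemma}), the identity $\chi^{*}(\mathcal{H})=|V(\mathcal{H})|/\a(\mathcal{H})$ for vertex-transitive hypergraphs (Proposition~2.10 in the excerpt), and the formula $\widehat{\alpha}(I)=\chi^{*}(\mathcal{H})/(\chi^{*}(\mathcal{H})-1)$ from Theorem~\ref{Adam'Theorem}. Thus the entire computation reduces to finding $\a(\mathcal{H}_{t}(C_{n}))$, the maximum size of an independent vertex set.

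Since a hyperedge of $\mathcal{H}_{t}(C_{n})$ is a block of $t$ consecutive vertices on the cycle, a set $W\subseteq V$ is independent if and only if the complement $V\setminus W$ meets every such block. In other words, $V\setminus W$ must be a hitting set for the family of arcs of length $t$ on the cyclic arrangement $x_{1},x_{2},\dots,x_{n}$. The next step is a short combinatorial lemma that I would state and prove in one line: if we place $k$ marked points on the cycle, the arcs between consecutive marks have lengths summing to $n$, and no arc of length $\ge t$ can be free of marks, so $k$ marks form a hitting set iff $n$ can be written as a sum of $k$ positive integers each $\le t$, which happens precisely when $k\ge\lceil n/t\rceil$. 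Therefore
\[
\a(\mathcal{H}_{t}(C_{n}))=n-\lceil n/t\rceil.
\]

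Writing $n=tq+r$ with $0\le r<t$, the ceiling is $q$ when $r=0$ and $q+1$ when $r\ne 0$. Plugging into the transitivity formula, when $r=0$ we get $\chi^{*}(\mathcal{H}_{t}(C_{n}))=tq/(q(t-1))=t/(t-1)$, and when $r\ne 0$ we get $\chi^{*}(\mathcal{H}_{t}(C_{n}))=n/(n-q-1)$. A direct simplification using $x/(x-1)$ then yields
\[
\widehat{\alpha}(I)=\begin{cases} t & r=0,\\[2pt] \dfrac{n}{q+1} & r\ne 0,\end{cases}
\]
as desired.

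The only real obstacle is the combinatorial evaluation of $\a(\mathcal{H}_{t}(C_{n}))$; everything else is formal manipulation of the two cited theorems. One small care point is that Theorem~\ref{Adam'Theorem} requires a non-trivial hyperedge, which holds because $t\ge 2$ and $n\ge t$. I would also remark that in the $r=0$ case the answer agrees with Proposition~\ref{myprop} applied to the $t$-partite hypergraph $\mathcal{H}_{t}(C_{n})$ guaranteed by Theorem~\ref{partitePath}, which serves as a useful consistency check.
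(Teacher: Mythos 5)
Your proposal is correct and follows essentially the same route as the paper: vertex-transitivity of $\mathcal{H}_{t}(C_{n})$, the identity $\chi^{*}=|V|/\a$ for vertex-transitive hypergraphs, and the Waldschmidt--fractional-chromatic-number formula, with everything reducing to computing $\a(\mathcal{H}_{t}(C_{n}))$. The only difference is that you actually justify $\a=n-\lceil n/t\rceil$ via the gap/hitting-set argument, whereas the paper simply exhibits the independent sets $V\setminus\{x_{ti}\}$ (resp.\ $V\setminus\{x_{ti},x_{n}\}$) and asserts their maximality, so your write-up is if anything slightly more complete.
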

\begin{proof}
By Lemma \ref{easylemma} we have $\mathcal{H}_{t}(C_n)$ is vertex-transitive (Definition~\ref{transitivity}), then by using [\cite{Ullman2011}, Proposition 3.1.1] we have 
$\chi^{*}(\mathcal{H}_{t}(C_n))=\displaystyle\frac{n}{\a}$ where $\a=\a(\mathcal{H}_{t}(C_n))$. If $r=0$, then it is straightforward to see that the independent set   
$W=V\backslash\{x_{ti}:i=1,2,\dots,q\}$  has the maximum size. Therefore, since $|W|=n-q$ we can write $\a=n-q$. Hence   
\begin{eqnarray}\label{number1}
\chi^{*}(\mathcal{H}_{t}(C_n))=\displaystyle\frac{n}{n-q}&\text{if $r=0$}.
\end{eqnarray}
If we assume $r\neq 0$ we can write $W=V\backslash\{x_{ti},x_n:i=1,2,\dots,q\}$ is an independent set with the maximum size. Therefore we have $\a=n-q-1$ and so we can write
\begin{eqnarray}\label{number2}    
\chi^{*}(\mathcal{H}_{t}(C_n))=\displaystyle\frac{n}{n-q-1}&\text{if $r\neq 0$}.
\end{eqnarray}
By using (\ref{number1}), (\ref{number2}) and by using [\cite{Adm2016}, Theorem 4.6]  we can conclude that our claim is true. 
\end{proof}

\section*{Acknowledgment}

We are very thankful to Professor R. Villarreal and Professor D. Ullman for their helpful comments and suggestions.  We gratefully acknowledge the helpful computer algebra system Macaulay2~\cite{M2} , without which our work would have been difficult or impossible. 

\bibliographystyle{plain}
\bibliography{mine1}

\end{document}